\numberwithin{equation}{section}
\theoremstyle{definition}
\newtheorem{theorem}{Theorem}[section]
\newtheorem{corollary}[theorem]{Corollary}
\newtheorem{lemma}[theorem]{Lemma}
\newtheorem*{corollary*}{Corollary} 
\newtheorem*{definition*}{Definition}
\newtheorem*{remark*}{Remark}
\newtheorem{remark}[theorem]{Remark}
\newtheorem{proposition}[theorem]{Proposition}
\newtheorem*{proposition*}{Proposition}
\crefname{theorem}{Theorem}{Theorems}
\crefname{lemma}{Lemma}{Lemmas}
\crefname{proposition}{Proposition}{Propositions}
\crefname{definition}{Definition}{Definitions}
\newcommand{\norm}[1]{\ensuremath{\left\| {#1} \right\|}}
\pgfplotsset{compat=1.15}
\title{On the One-dimensional Singular Abreu Equations}
\author{Young Ho Kim}
\address{Department of Mathematics, Indiana University, Bloomington, IN 47405, USA}
\email{yk89@iu.edu}
\begin{document}
	\subjclass[2020]{35B45, 35B65, 35J40.}
	\keywords{
		Singular Abreu equation,
		fourth-order equation,
		a priori estimate,
		characterization of minimizers,
		second boundary value problem.
	}
	\begin{abstract} 
		Singular fourth-order Abreu equations have been used 
		to approximate minimizers of convex functionals
		subject to a convexity constraint in dimensions higher than or equal to two.
		For Abreu type equations,
		they often exhibit different solvability phenomena 
		in dimension one and dimensions at least two.
		We prove the analogues of these results for the variational problem and 
		singular Abreu equations in dimension one,
		and use the approximation scheme to obtain a characterization of 
		limiting minimizers to the one-dimensional variational problem.
	\end{abstract}
	
	\maketitle

	\section{Introduction and the Statement of Main Results}
	In this note, we consider a class of singular fourth-order Abreu equations in dimension one.
	In dimensions higher than or equal to two,
	singular Abreu equations have been used by various authors 
	in the approximation of minimizers of convex functionals with a convexity constraint.
	We will briefly recall these results below.
	On the other hand,
	for Abreu type equations,
	they often exhibit different solvability phenomena  
	in dimension one and dimensions at least two.
	We prove the analogues in dimension one,
	and find a characterization of limiting minimizers to a one-dimension variational problem
	by using this approximation scheme.
	
	Suppose $\Omega$ and $\Omega_0$ are bounded, smooth, convex domains in $\mathbb{R}^n$
	with $\Omega_0\Subset \Omega$.
	Let $\varphi \in C^5(\overline{\Omega})$ be a given convex function,
	and $F=F(x,z,p):\mathbb{R}^n\times\mathbb{R}\times\mathbb{R}^n\rightarrow\mathbb{R}$
	be a smooth Lagrangian that is convex 
	in the variables $z\in\mathbb{R}$ and $p\in\mathbb{R}^n$.
	Consider the variational problem
	\begin{equation}\label{hdvar}
	\begin{aligned}
		\inf_{u\in \overline{S}[\varphi,\Omega_0]} \int_{\Omega_0} F(x, u(x), Du(x)) \, dx
	\end{aligned}
	\end{equation}
	over the competitors $u$ with a convexity constraint given by
	\begin{equation}\label{hdvarcon}
	\begin{aligned}
		\overline{S}[\varphi,\Omega_0]
		=\{ u:\Omega\rightarrow\mathbb{R} \text{ convex, }
		u=\varphi \text{ on } \Omega\setminus\Omega_0 \}
		\text{.}
	\end{aligned}
	\end{equation}
	Because of the convexity constraint, 
	variational problems of this type are not easy to handle, 
	especially in numerical schemes \cite{BCMO,M16}.
	When $n\geq 2$ and the Lagrangian $F=F(x,z)$ does not depend on the gradient variable $p$,
	Carlier and Radice \cite{CR} introduced an approximation scheme for
	minimizers of the problem (\ref{hdvar})--(\ref{hdvarcon}).
	Le \cite{Singular_Abreu} extended this result to cover the case when the Lagrangian $F$
	could be split into $F(x,z,p) = F^0(x,z) + F^1(x,p)$ with appropriate conditions on $F^0$ and $F^1$, 
	and this result was followed by many other works including those of
	Le \cite{Convex_Approx, Twisted_Harnack} and of Le-Zhou \cite{Abreu_HD}. 
	One example of a problem of the type (\ref{hdvar})--(\ref{hdvarcon}) 
	is the Rochet-Chon\'e model \cite{RC} for the monopolist problem. 
	For this problem, 
	the Lagrangian is given by $F(x,z,p) = (|p|^q/q - x\cdot p + z)\eta_0 (x)$,
	where $q\in(1,\infty)$ and $\eta_0$ is a nonnegative Lipschitz function.
	
	The scheme introduced by Carlier and Radice in \cite{CR} for the functional
	\begin{equation}
	\begin{aligned}
		J_0(v)=\int_{\Omega_0}
		F(x,v(x)) \, dx
	\end{aligned}
	\end{equation}
	is to use uniformly convex solutions, 
	for $\varepsilon>0$, to the second boundary value problem
	\begin{align}\label{hd:eleq}
	\left\{
	\begin{aligned}
	\varepsilon U_\varepsilon^{ij} D_{ij} w_\varepsilon &= f_\varepsilon 
		:= \dfrac{\partial F}{\partial z} (x, u_\varepsilon)\chi_{\Omega_0} + 
			\dfrac{1}{\varepsilon} (u_\varepsilon-\varphi)
			\chi_{\Omega\setminus\Omega_0} &&\text{ in } \Omega\text{,}\\
		w_\varepsilon &= ({\det D^2 u_\varepsilon})^{-1} 
			&&\text{ in }\Omega\text{,}\\
		u_\varepsilon &= \varphi\text{, }w_\varepsilon = \psi &&\text{ on } \partial\Omega\text{,}
	\end{aligned}
	\right.
	\end{align}
	where $U_\varepsilon^{ij}=(\det D^2 u_\varepsilon)(D^2u_\varepsilon)^{-1}$
	is the cofactor matrix of the Hessian matrix $D^2 u_{\varepsilon}$,
	in approximating minimizers of the variational problem (\ref{hdvar})--(\ref{hdvarcon}).
	Here $\chi_E$ denotes the characteristic function of the set $E$.
	The first two equations in (\ref{hd:eleq}) arise as critical points of the 
	approximate functional
	\begin{equation}
	\begin{aligned}
		J_0(v)
		+ \frac{1}{2\varepsilon}\int_{\Omega\setminus\Omega_0} (v-\varphi)^2 \, dx
		- \varepsilon \int_\Omega \log \det D^2 v \, dx
		\text{,}
	\end{aligned}
	\end{equation}
	and the boundary conditions correspond to the prescribed boundary values
	of the function $u_\varepsilon$ and its Hessian determinant $\det D^2 u_\varepsilon$.
	Due to these boundary conditions, (\ref{hd:eleq}) is called a second boundary value problem.
	In the more general case when $F(x,z,p) = F^0(x,z) + F^1(x,p)$,
	Le \cite{Singular_Abreu} uses the same approximation scheme with 
	$f_\varepsilon$ in (\ref{hd:eleq}) replaced by 
	\begin{equation}\label{singfe}
	\begin{aligned}
		f_\varepsilon = 
			\left\{ \dfrac{\partial F^0}{\partial z} (x, u_\varepsilon) 
			- \dfrac{\partial}{\partial x_i} 
			\left( \dfrac{\partial F^1}{\partial p_i} (x,Du_\varepsilon) \right)
			\right\} \chi_{\Omega_0} +
			\dfrac{1}{\varepsilon} (u_\varepsilon-\varphi) \chi_{\Omega\setminus\Omega_0}
			\text{.}
	\end{aligned}
	\end{equation}
	
	The first two equations
	\begin{equation}
	\begin{aligned}
		U_\varepsilon^{ij} D_{ij} w_\varepsilon =\varepsilon^{-1}f_\varepsilon
		\text{, }
		w_\varepsilon=(\det D^2 u_\varepsilon)^{-1}
	\end{aligned}
	\end{equation}
	form a fourth-order nonlinear equation of Abreu type \cite{Ab}
	that arises in the problem of finding K\"ahler metrics of constant scalar curvature
	for toric manifolds \cite{Donaldson}.
	The divergence term $\frac{\partial}{\partial x_i} 
	\left(\frac{\partial F^1}{\partial p_i} (x,Du_\varepsilon)\right)$
	added for $f_\varepsilon$ in the general case (\ref{singfe})
	is only guaranteed to be a measure when $u_\varepsilon$ is convex;
	hence (\ref{hd:eleq}) is called a \emph{singular Abreu equation}.

	We recall how the approximation was used in Carlier-Radice \cite{CR} and 
	Le \cite{Singular_Abreu, Convex_Approx}.
	First, an arbitrary uniformly convex solution to the equation (\ref{hd:eleq}) 
	(with $f_\varepsilon$ given by (\ref{singfe}) in \cite{Singular_Abreu, Convex_Approx})
	is shown to satisfy a priori $W^{4,s}$ estimates for all $s\in(n,\infty)$;
	then the Leray-Schauder degree theory and the a priori estimates
	yield the existence of solution to the equation.
	Next, it is proved that after extracting a subsequence $\varepsilon_k\rightarrow 0$,
	solutions $(u_{\varepsilon_k})_k$ are shown to converge uniformly
	on compact subsets of $\Omega$ 
	to a minimizer of the variational problem (\ref{hdvar})--(\ref{hdvarcon}).

	The previously mentioned results study the case when $n\geq 2$;
	we will focus on the one-dimensional case in problem (\ref{1deqn}) in this note,
	and it is not clear if similar results hold.
	The reason is as follows.
	The one-dimensional Abreu equation
	\begin{equation}\label{cweqn} 
	\begin{aligned} 
		(1/u'')'' = U^{ij} D_{ij} w = f
	\end{aligned}
	\end{equation}
	was studied by Chau and Weinkove in \cite[Proposition 3.2]{CW}
	in the case when the right-hand side $f$ is
	a function of only the spatial variable.
	For solutions for the second boundary value problem to (\ref{cweqn}) to exist,
	$f$ should satisfy a ``stability'' condition (see \cite[(3.2)]{CW});
	this is different from the case when $n\geq 2$,
	where the second boundary value problem for the Abreu equation
	has a solution if $f\in L^t(\Omega)$, $t>n$, as proved by Le in \cite{Le16}.
	
	In this note, problem (\ref{1deqn}) on the other hand involves a singular term.
	As we can see in Theorem \ref{mainthm}\ref{thm:est},
	``stability'' conditions are not required for solutions to this type of
	equations to exist.
	Contrary to the existence result for equations without singular terms,
	this result resembles the higher-dimensional counterpart.
	
	To formulate the one-dimensional problem,
	first note that $U_\varepsilon^{ij} D_{ij} w_\varepsilon = w_\varepsilon''$ when $n=1$.
	Without loss of generality, we can assume that $\Omega=(-1,1)$ and $\Omega_0 = (a,b)$, where $-1<a<b<1$.
	Then our second boundary value problem for the singular Abreu equation 
	in dimension one is given by
	\begin{equation}\label{1deqn} 
	\begin{aligned} 
	\left\{
		\begin{aligned} 
			\varepsilon w_\varepsilon'' &= f_\varepsilon 
				:= 
				\dfrac{1}{\varepsilon} (u_\varepsilon-\varphi) \chi_{(-1,1)\setminus(a,b)} \\
				&\qquad + \left(F_z^0(x,u_\varepsilon) - F_{px}^1(x,u_\varepsilon')
				-F_{pp}^1(x,u_\varepsilon')u_\varepsilon''\right)\chi_{(a,b)} &\text{ in }(-1,1)\text{,}\\
			w_\varepsilon &= 1/u_\varepsilon'' &\text{ in } (-1,1)\text{,}\\
			u_\varepsilon(\pm 1) &= 0\text{,} 
			\quad\text{and }
			w_\varepsilon(\pm 1) = \rho_\pm > 0 \text{.}
		\end{aligned}
	\right.
	\end{aligned}
	\end{equation}
	Here $\varphi$ is assumed to be smooth on $[-1,1]$, $\varphi(\pm 1) = 0$ 
	and satisfies $\varphi'' \geq c_0 > 0$.
	The first two equations of (\ref{1deqn}) arise as critical point of the functional
	\begin{equation}\label{jedef}
	\begin{aligned}
		J_\varepsilon (v):=
		\int_a^b F(x, v(x), v'(x)) \, dx - \varepsilon\int_{-1}^1 \log v''(x) \, dx
		+ \frac{1}{2\varepsilon}\int_{(-1,a)\cup (b,1)} (v-\varphi)^2 \, dx
		\text{,}
	\end{aligned}
	\end{equation}
	where the Lagrangian $F$ is given by 
	\begin{align}\label{lagdef}
		F(x,z,p) = F^0 (x,z) + F^1(x,p)
		\text{.}
	\end{align}
	We also assume that $F^0$ and $F^1$ satisfy
	\begin{itemize}
		\item[(F1)] $F^0,F^1\in C^2([-1,1]\times\mathbb{R})$,
		\item[(F2)] $F^0$ is convex in $z$,
		\item[(F3)] $F^1$ is convex in $p$ so that $F_{pp}^1 (x,p) \geq 0$,
		\item[(F4)] For smooth, increasing functions 
			$\eta, \eta_1: [0,\infty)\rightarrow[0,\infty)$
			and a positive constant $D_*$,	
			we have for all $x\in [-1,1]$ and $p,z\in\mathbb{R}$,
		\begin{equation}\label{Fcond}
		\begin{aligned}
			&|F^0 (x,z)| + |F_z^0(x,z)| \leq \eta(|z|)
			\text{,}\quad
			|F_{px}^1 (x,p)| \leq D_* (1 + |p|) 
			\text{,}\quad\text{and }\\
			&|F_p^1 (x,p) | + |F_{pp}^1(x,p)| \leq \eta_1 (|p|)
			\text{.}
		\end{aligned}
		\end{equation}
	\end{itemize}
	One example of a one-dimensional Lagrangian $F=F(x,z,p)$ satisfying (F1)--(F4) is 
	\begin{align*}
		F(x,z,p)=\left(\frac{p^2}{2} - px + z \right) \eta_0(x)
		\text{,}
	\end{align*}
	where $\eta_0$ is a nonnegative smooth function on $[-1,1]$.
	Here $F=F_0+ F_1$, where
	\begin{align*}
		F_0(x,z) = z\eta_0(x)
		\quad\text{and }
		F_1(x,p) = \left(\frac{p^2}{2} - px\right)\eta_0(x)
	\end{align*}
	are smooth, convex (in $z$ and $p$, respectively) functions
	whose derivatives satisfy the growth estimate in (\ref{Fcond}). 
	Since $\eta_0\geq 0$, (F1) and (F3) are also satisfied.

	Our main result is the following theorem.
	\begin{theorem}\label{mainthm}
		Let $-1<a<b<1$.
		Assume that $\varphi$ is a smooth, uniformly convex function on $[-1,1]$ 
		with $\varphi(\pm 1) = 0$ and $\varphi''\geq c_0 >0$.
		Assume the Lagrangian $F$ given by (\ref{lagdef}) satisfies (F1)--(F4) above.	
		Then the following hold.
		\begin{enumerate}[label=(\roman*)]
			\item There is a constant 
				$\varepsilon_0=\varepsilon_0(a,b,D_*,\rho_\pm,\varphi,\eta,\eta_1)\in (0,1)$
				such that for $\varepsilon\in(0,\varepsilon_0)$, 
				the problem (\ref{1deqn})
				has a uniformly convex $W^{4,\infty}(-1,1)$ solution $u_\varepsilon$.
				Furthermore, there is a constant 
				$\widetilde{C}=\widetilde{C}(a,b,D_*,\rho_\pm,\varphi,\eta,\eta_1)>0$
				such that
				\begin{equation}\label{west}
				\begin{aligned}
					u_\varepsilon'' \geq \widetilde{C}\varepsilon
					\quad\text{in } (a,b)
					\text{.}
				\end{aligned}
				\end{equation}
				\label{thm:est}
			\item Let $(u_\varepsilon)_{0<\varepsilon<1}$ be $W^{4,\infty}(-1,1)$ solutions to (\ref{1deqn}).
				Then, there is a sequence $\varepsilon_k \rightarrow 0$ such that $u_{\varepsilon_k}$
				converges uniformly on compact intervals in $(-1,1)$ to a convex function $u$ 
				in $(-1,1)$ that satisfies $u=\varphi$ outside $(a,b)$ 
				and minimizes the functional
				\begin{equation}\label{1dfnctl}
				\begin{aligned}
					J(v) = \int_a^b F(x, v(x), v'(x)) \, dx
				\end{aligned}
				\end{equation}
				over $v\in \overline{S}[\varphi]$,
				where $\overline{S}[\varphi]$ is given by
				\begin{equation}\label{1dvarprob}
				\begin{aligned}
					\overline{S}[\varphi]
					= \{ v :
						v\text{ is convex on }[-1,1]
						\text{ and }v=\varphi
						\text{ outside }(a,b) 
					\}
					\text{.}
				\end{aligned}
				\end{equation}
				\label{thm:conv}
			\item Let $q\in[1,\infty)$ be fixed,
				and assume that $u$ is given as in \ref{thm:conv}.
				If the Lagrangian $F$ also satisfies
				\begin{equation}\label{sec3growth}
				\begin{aligned}
					|F_{zz}^0 (x,z)| \leq \eta_2(|z|)
					\quad\text{in }[-1,1]\times\mathbb{R}
				\end{aligned}
				\end{equation}
				for a smooth, increasing function $\eta_2$, 
				then there is a function $w \in L^q (a,b)$ 
				which is a weak limit in $L^q(a,b)$ of a subsequence of 
				$(\varepsilon_k w_{\varepsilon_k})_k$, and satisfies
				\begin{equation}\label{eleqdist}
				\begin{aligned}
					w'' = F_z^0(x,u) - (F_p^1(x,u'))'
					\quad\text{in }(a,b)
				\end{aligned}
				\end{equation}
				in the sense of distributions.
				\label{thm:eqn}
		\end{enumerate}
	\end{theorem}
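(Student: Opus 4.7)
The theorem has three parts, which I treat in order. For part (i), my plan is to apply Leray-Schauder degree theory along a one-parameter family connecting a baseline problem (where existence is clear, e.g.\ $F\equiv 0$) to (\ref{1deqn}), with the real content being uniform a priori $W^{4,\infty}$ estimates along the family. The estimates I would establish, in order, are: (a) $\|u_\varepsilon\|_{L^\infty([-1,1])}\le C$ from convexity, the fixed endpoint conditions $u_\varepsilon(\pm 1)=0$, and the penalty bound $\int_{(-1,a)\cup(b,1)}(u_\varepsilon-\varphi)^2\le C\varepsilon$, which follows from $J_\varepsilon(u_\varepsilon)\le J_\varepsilon(\varphi)$ (valid because $J_\varepsilon$ is convex under (F1)--(F3), so critical points are minimizers); (b) a local $C^1$ bound on $u_\varepsilon$ from convexity; (c) the critical lower bound $u_\varepsilon''\ge\widetilde{C}\varepsilon$ on $(a,b)$, obtained by integrating $\varepsilon w_\varepsilon''=f_\varepsilon$ twice against the one-dimensional Green's function on $(-1,1)$ with Dirichlet data $\rho_\pm$ and using a uniform $L^1$ bound on $f_\varepsilon$ (the growth (F4) controls all but the term $F_{pp}^1(x,u_\varepsilon')u_\varepsilon''$, which is handled via $\int_a^b u_\varepsilon''\,dx=u_\varepsilon'(b)-u_\varepsilon'(a)$ once $|F_{pp}^1|\le\eta_1$ is used); and (d) bootstrapping to $W^{4,\infty}$ from the lower bound on $u_\varepsilon''$ and the regularity of $f_\varepsilon$.

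For part (ii), the $L^\infty$ and local $C^1$ bounds from (i) yield, via Arzel\`a--Ascoli, a subsequence converging locally uniformly to a convex function $u$ on $(-1,1)$; the penalty bound forces $u=\varphi$ outside $(a,b)$. To show $u$ minimizes $J$, I compare $u_\varepsilon$ with a smoothed competitor: given $v\in\overline{S}[\varphi]$, construct $v_\delta\in\overline{S}[\varphi]$ with $v_\delta''\ge\delta>0$ (for instance by adding to $v$ a small uniformly convex perturbation supported in $(a,b)$ and adjusting so $v_\delta=\varphi$ outside). Convexity of $J_\varepsilon$ gives $J_\varepsilon(u_\varepsilon)\le J_\varepsilon(v_\delta)$; the log-barrier contribution vanishes in the limit because $\varepsilon\int\log u_\varepsilon''=O(\varepsilon|\log\varepsilon|)$ (using $u_\varepsilon''\ge\widetilde{C}\varepsilon$ for the lower side and $\log t\le t-1$ with $\int u_\varepsilon''\le C$ for the upper). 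Sending $\varepsilon\to 0$ and then $\delta\to 0$, together with lower semicontinuity of $J$, yields $J(u)\le J(v)$.

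For part (iii), the bound $u_\varepsilon''\ge\widetilde{C}\varepsilon$ gives $\|\varepsilon w_\varepsilon\|_{L^\infty(a,b)}\le 1/\widetilde{C}$, so a subsequence of $(\varepsilon_k w_{\varepsilon_k})_k$ converges weakly in $L^q(a,b)$ to some $w$. On $(a,b)$ the equation reads
\[
\varepsilon_k w_{\varepsilon_k}''=F_z^0(x,u_{\varepsilon_k})-\bigl(F_p^1(x,u_{\varepsilon_k}')\bigr)'
\quad\text{in }\mathcal{D}'(a,b),
\]
and I pass to the limit termwise: the left side tends to $w''$ in $\mathcal{D}'$; $F_z^0(x,u_{\varepsilon_k})\to F_z^0(x,u)$ from uniform convergence on compact subsets together with (\ref{sec3growth}); and $F_p^1(x,u_{\varepsilon_k}')\to F_p^1(x,u')$ in $L^1_{\mathrm{loc}}(a,b)$ from pointwise a.e.\ convergence of derivatives of convex functions and the bound from $\eta_1$, which suffices to pass the distributional derivative to the limit. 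The main obstacle I anticipate is step (c) of part (i): the right-hand side $f_\varepsilon$ couples the singular penalty $\varepsilon^{-1}(u_\varepsilon-\varphi)$ with the term $F_{pp}^1(x,u_\varepsilon')u_\varepsilon''$ containing the unknown $u_\varepsilon''$ itself, so the $L^1$ control on $f_\varepsilon$ must be closed carefully against the resulting bound on $w_\varepsilon$.
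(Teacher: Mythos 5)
Part (iii) of your proposal is essentially the paper's argument and is fine. Parts (i) and (ii), however, each contain a gap that goes to the heart of the matter.

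First, you repeatedly use ``$J_\varepsilon$ is convex, so the critical point $u_\varepsilon$ is a minimizer,'' to get $J_\varepsilon(u_\varepsilon)\le J_\varepsilon(\varphi)$ in step (a) and $J_\varepsilon(u_\varepsilon)\le J_\varepsilon(v_\delta)$ in part (ii). This is false here: (\ref{1deqn}) is a \emph{second} boundary value problem, and the prescribed condition $w_\varepsilon(\pm 1)=\rho_\pm>0$ is not the natural boundary condition of $J_\varepsilon$ on $\{v(\pm1)=0\}$ (that would be $w_\varepsilon(\pm1)=0$). The first variation of $J_\varepsilon$ at $u_\varepsilon$ in an admissible direction $h$ produces the boundary term $-\varepsilon[w_\varepsilon h']_{-1}^{1}=-\varepsilon\rho_+h'(1)+\varepsilon\rho_-h'(-1)\ne 0$, so convexity only yields $J_\varepsilon(v)\ge J_\varepsilon(u_\varepsilon)+(\text{boundary terms})$, exactly as in the paper's inequality (\ref{1-e1}). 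The offending terms involve $\varepsilon\rho_\pm u_\varepsilon'(\pm1)$, and $u_\varepsilon'(\pm1)$ is \emph{not} a priori bounded (only interior gradient bounds are available from convexity); showing $\varepsilon u_\varepsilon'(\pm1)\to 0$ is precisely the content of Lemma \ref{gradconvlemma}, which the paper identifies (Remark \ref{rmk:gradconv}) as the new one-dimensional difficulty. Your related claim ``$\int u_\varepsilon''\le C$'' is equivalent to a uniform bound on $u_\varepsilon'(1)-u_\varepsilon'(-1)$ and is therefore also unjustified. Without this lemma your part (ii) does not close, and your step (a) needs to be replaced by something like the paper's testing of the equation against $u_\varepsilon-\varphi$, where the boundary term $-\rho_+u_\varepsilon'(1)+\rho_-u_\varepsilon'(-1)$ happens to have a favorable sign and can be discarded.

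Second, your step (c) — the key estimate (\ref{west}) — does not work as described. The Green's function representation on $(-1,1)$ requires a uniform $L^1(-1,1)$ bound on $f_\varepsilon$, but on $(-1,a)\cup(b,1)$ one has $f_\varepsilon=\varepsilon^{-1}(u_\varepsilon-\varphi)$, and the best available penalty bound $\int_{(-1,a)\cup(b,1)}(u_\varepsilon-\varphi)^2\le C\varepsilon$ only gives $\|f_\varepsilon\|_{L^1}\le C\varepsilon^{-1/2}$. Feeding this into the Green's function yields $\varepsilon w_\varepsilon\le C\varepsilon^{-1/2}$, i.e.\ $u_\varepsilon''\ge C\varepsilon^{3/2}$, which is strictly weaker than (\ref{west}). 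The paper's Lemma \ref{webddlemma} avoids the penalty region entirely: on $(a,b)$ the quantity $\varepsilon w_\varepsilon'+F_p^1(x,u_\varepsilon')$ has derivative $F_z^0(x,u_\varepsilon)$, which is uniformly bounded, so $\varepsilon w_\varepsilon'$ has bounded oscillation on $(a,b)$; combining this with the integral bound $\int_a^b\varepsilon w_\varepsilon\,dx\le C_4/c_0$ (which comes from the $\int\varphi''/u_\varepsilon''$ term in the testing identity, an ingredient absent from your outline) gives $\varepsilon w_\varepsilon\le D_3$ on $(a,b)$. You would need this, or an equivalent interior argument, to obtain (\ref{west}); and for the full $W^{4,\infty}$ bootstrap you also need the \emph{upper} bound on $u_\varepsilon''$ (lower bound on $w_\varepsilon$), which the paper gets from a maximum principle applied to $\log w_\varepsilon-Mu_\varepsilon$ and which your outline omits.
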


	\begin{remark}
		For Theorem \ref{mainthm}\ref{thm:est}--\ref{thm:conv},
		the proofs are similar to that of Le 
		\cite{Singular_Abreu, Convex_Approx, Twisted_Harnack}.
		Since $U_\varepsilon^{ij} D_{ij} w_\varepsilon$ is much simpler in the one-dimensional case
		(as it is just $(1/u_\varepsilon'')''$),
		we do not need to invoke regularity results used in the higher-dimensional case.
		Moreover, we obtain $W^{4,\infty}(-1,1)$ estimates in Theorem \ref{mainthm}\ref{thm:est} instead of the 
		$W^{4,s}$ estimates in higher dimensions.
		In Theorem \ref{mainthm}\ref{thm:conv} we need an additional step, 
		as part of the proofs in the higher-dimensional case 
		do not carry over to the one-dimensional case;
		see Remark \ref{rmk:gradconv}.
	\end{remark}
	
	\begin{remark}
		The estimate (\ref{west}) is new.
		It is not known if a similar estimate holds in higher dimensions
		for solutions to (\ref{hd:eleq}) with $f_\varepsilon$ given by (\ref{singfe}).
	\end{remark}
	
	\begin{remark}\label{rmk:Lions}\leavevmode
	\begin{enumerate}
		\item Theorem \ref{mainthm}\ref{thm:eqn} is related to the result of Lions \cite{Lions}.
		Suppose $\Omega_0\subset\mathbb{R}^n$ is an open, bounded, smooth and strongly convex domain.
		Then, Lions showed that the minimizer $u$ of the functional
		\begin{equation}
		\begin{aligned}
			\int_{\Omega_0} \left[\frac{1}{2} |Du|^2 -fu + f_i D_i u\right]  dx
		\end{aligned}
		\end{equation}
		over all convex functions $u\in H_0^1(\Omega_0)$ satisfies, 
		in the sense of distributions,
		\begin{equation}
		\begin{aligned}
			-\Delta u - f - D_i f_i = D_{ij} \mu_{ij}
		\end{aligned}
		\end{equation}
		where $(\mu_{ij})_{1\leq i,j\leq n}$ is a symmetric nonnegative matrix of Radon measures.
		Also see Carlier \cite{Car02}.

		The constraint for our variational problem (\ref{1dfnctl})--(\ref{1dvarprob}) 
		is that each competitor function
		has a convex extension that agrees with a given convex function $\varphi$
		outside $\Omega_0$.
		In addition to the Dirichlet boundary condition 
		$u=\varphi$ on $\partial\Omega_0$,
		this puts an additional restriction on the gradient of the minimizer 
		at the boundary $\partial\Omega_0$.
		Therefore, the result of Lions cannot be easily applied.

		We instead use the approximation scheme 
		in Theorem \ref{mainthm}\ref{thm:est}--\ref{thm:conv}
		to show that (\ref{eleqdist}) holds, 
		where $w$ is an $L^q$ function instead of being just a measure.

	\item As we use the approximation scheme 
		in Theorem \ref{mainthm}\ref{thm:est}--\ref{thm:conv},
		in Theorem \ref{mainthm}\ref{thm:eqn} we can only characterize 
		minimizers to (\ref{1dfnctl})--(\ref{1dvarprob}) 
		given by limits of solutions to (\ref{1deqn}).
		In certain cases (for instance, if the minimizer is unique),
		all solutions can be approximated,
		but this is not guaranteed in general.
		It would be interesting to know if there is a characterization
		for minimizers that are not limits of solutions to (\ref{1deqn}).
	\end{enumerate}
	\end{remark}
	
	The rest of the note is organized as follows.
	In Section \ref{sec:1dest}, we prove two estimates 
	satisfied by the solutions to (\ref{1deqn});
	one is the a priori estimate used to prove 
	the first part of Theorem \ref{mainthm}\ref{thm:est},
	the other is the estimate in (\ref{west}).
	This proves Theorem \ref{mainthm}\ref{thm:est}.
	In Section \ref{sec:1dconvthm}, we prove Theorem \ref{mainthm}\ref{thm:conv}
	and in Section \ref{sec:eleqthm}, we prove Theorem \ref{mainthm}\ref{thm:eqn}.
	The final section, Section \ref{sec:conclusion} contains summary of the note
	and some possible directions for future research.

	\section{Estimates and Existence of Solutions}\label{sec:1dest}
	In this section, we prove Theorem \ref{mainthm}\ref{thm:est}.
	The first statement can be proved using degree theory and 
	the a priori $W^{4,\infty}$ estimate in Proposition \ref{aprioriest} below.
	For this, we will mostly follow Le \cite[Section 2]{Twisted_Harnack},
	but since we are working with a simpler equation, some steps can be simplified.
	We will prove the second estimate (\ref{west}) in the process of proving the $W^{4,\infty}$ estimate. 

	In the following, we will always assume that $\varepsilon$ satisfies 
	$0<\varepsilon<\varepsilon_0<1$.	
	\begin{proposition}[A priori $W^{4.\infty}$ estimate]\label{aprioriest}
		Suppose $u_\varepsilon$ is a uniformly convex $W^{4,\infty}(-1,1)$ 
		solution to (\ref{1deqn}), 
		where the Lagrangian $F$ satisfies (F1)--(F4).
		If $0 < \varepsilon < \varepsilon_0$, where $\varepsilon_0$ is a small number
		depending only on $a,b,D_*,\rho_\pm,\varphi,\eta,\eta_1$, 
		then there is $C(\varepsilon) > 0$ such that
		\begin{equation}
		\begin{aligned}
			\norm{u_\varepsilon}_{W^{4,\infty}(-1,1)} \leq C(\varepsilon)
			\text{.}
		\end{aligned}
		\end{equation}
	\end{proposition}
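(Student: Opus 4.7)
The plan is to follow the approach of Le \cite{Twisted_Harnack,Singular_Abreu,Convex_Approx} adapted to one dimension. A major simplification is that $U_\varepsilon^{ij}D_{ij}w_\varepsilon = w_\varepsilon''$, reducing the second equation of (\ref{1deqn}) to a fourth-order ODE for $u_\varepsilon$ and eliminating the need for the Monge--Amp\`ere regularity theory used in higher dimensions. The proof splits into four steps: (a) uniform $L^\infty$ and Lipschitz bounds on $u_\varepsilon$; (b) a uniform lower bound $w_\varepsilon \geq c > 0$ (i.e.\ upper bound on $u_\varepsilon''$); (c) the sharp upper bound $w_\varepsilon \leq C/\varepsilon$ on $(a,b)$, giving (\ref{west}); (d) $\varepsilon$-dependent $L^\infty$ bounds on $u_\varepsilon'''$ and $u_\varepsilon''''$ from the equation.

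For (a), convexity and $u_\varepsilon(\pm 1) = 0$ give $u_\varepsilon \leq 0$ immediately. For the lower bound I test $\varepsilon w_\varepsilon'' = f_\varepsilon$ against $u_\varepsilon - \varphi$, integrate by parts twice (using $u_\varepsilon(\pm 1) = \varphi(\pm 1) = 0$, $w_\varepsilon(\pm 1) = \rho_\pm$, and $\int w_\varepsilon u_\varepsilon'' = 2$), and extract an estimate $\tfrac{1}{\varepsilon}\|u_\varepsilon-\varphi\|_{L^2((-1,1)\setminus(a,b))}^2 \leq C$ from the uniform convexity $\varphi'' \geq c_0$ and (F4), after absorbing the term $\int_a^b F_{pp}^1 u_\varepsilon''(u_\varepsilon-\varphi)$ using the $L^\infty$ bound $u_\varepsilon \leq 0$ and a small enough $\varepsilon$; a bootstrap with an emerging Lipschitz bound closes the estimate and yields $\|u_\varepsilon\|_{L^\infty} \leq C$. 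The Lipschitz bound on $u_\varepsilon$ itself then follows from convexity on compact subintervals together with comparison with $\varphi$ near the boundary. For (b), the minimum principle applied to $w_\varepsilon$ on $[-1,1]$ localizes $\min w_\varepsilon$ either at $\pm 1$ (where $w_\varepsilon = \rho_\pm$) or at an interior $x_0$ where $f_\varepsilon(x_0)\geq 0$; a case split based on $x_0 \in (a,b)$ or not, combined with (F4) and (a), gives the uniform lower bound $w_\varepsilon \geq c > 0$.

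Step (c) is the heart of the proof. Set $W_\varepsilon := \varepsilon w_\varepsilon$; it satisfies $W_\varepsilon'' = f_\varepsilon$ with $W_\varepsilon(\pm 1) = \varepsilon\rho_\pm$. Apply the maximum principle to $W_\varepsilon$ on $[-1,1]$. If $\max W_\varepsilon$ is attained on the boundary, $W_\varepsilon \leq \varepsilon\max\rho_\pm$ and we are done. Otherwise at an interior $x_1$, $f_\varepsilon(x_1) \leq 0$. If $x_1 \notin [a,b]$, the sign forces $u_\varepsilon(x_1) \leq \varphi(x_1)$, and a direct integration of $W_\varepsilon'' = f_\varepsilon$ from the boundary (with the $L^\infty$ bound on $f_\varepsilon$ in this region from (a)) bounds $W_\varepsilon(x_1)$ uniformly. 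If $x_1 \in (a,b)$, the inequality $F_{pp}^1(x_1,u_\varepsilon'(x_1))\,u_\varepsilon''(x_1) \geq F_z^0 - F_{px}^1$ combined with (F4) and the Lipschitz bound from (a) bounds $u_\varepsilon''(x_1)$ above, hence $W_\varepsilon(x_1)$ above. In all cases $W_\varepsilon \leq C$, i.e.\ $u_\varepsilon'' \geq C^{-1}\varepsilon$ on $(a,b)$.

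Finally for (d), steps (b)--(c) give $u_\varepsilon'' \in [\tilde C\varepsilon, C]$ in $(a,b)$ and $u_\varepsilon'' \in [c,C]$ in $(-1,1)\setminus(a,b)$, so $w_\varepsilon \in [c, C/\varepsilon]$ and $\|f_\varepsilon\|_{L^\infty} \leq C(\varepsilon)$. Integrating $\varepsilon w_\varepsilon'' = f_\varepsilon$ twice with boundary data $w_\varepsilon(\pm 1) = \rho_\pm$ gives $\|w_\varepsilon\|_{W^{2,\infty}(-1,1)} \leq C(\varepsilon)$, and the identities $u_\varepsilon''' = -w_\varepsilon'/w_\varepsilon^2$ and $u_\varepsilon'''' = -w_\varepsilon''/w_\varepsilon^2 + 2(w_\varepsilon')^2/w_\varepsilon^3$ deliver $\|u_\varepsilon\|_{W^{4,\infty}(-1,1)} \leq C(\varepsilon)$. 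The main obstacle is step (c): the singular term $-F_{pp}^1 u_\varepsilon''$ in $f_\varepsilon$ permits no immediate sign argument, and one must use in tandem the sign $F_{pp}^1 \geq 0$, the growth assumption (F4), and the Lipschitz bound from (a) to close the estimate. The situation becomes especially delicate where $F_{pp}^1$ is small, and may require an auxiliary barrier that mixes the penalty from $(-1,1)\setminus(a,b)$ with the bulk term on $(a,b)$ to prevent $W_\varepsilon$ from concentrating at a degenerate interior maximum.
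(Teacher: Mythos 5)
Your steps (a) and (d) match the paper's argument (test the equation against $u_\varepsilon-\varphi$, integrate by parts twice, case-split on whether $u_\varepsilon\geq\varphi$ somewhere in $(a,b)$; then read off the third and fourth derivatives from $w_\varepsilon=1/u_\varepsilon''$). But steps (b) and (c) — precisely the two bounds on $w_\varepsilon$ — have genuine gaps. For (b), knowing that at an interior minimum $x_0$ of $w_\varepsilon$ one has $f_\varepsilon(x_0)\geq 0$ carries no quantitative information about $w_\varepsilon(x_0)$: outside $(a,b)$ it only says $u_\varepsilon(x_0)\geq\varphi(x_0)$, and inside $(a,b)$ it says $F_{pp}^1u_\varepsilon''(x_0)\leq F_z^0-F_{px}^1$, which is vacuous when $F_{pp}^1=0$ (e.g. $F^1\equiv 0$, which (F3) permits). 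The paper does not claim, and its proof does not deliver, a lower bound for $w_\varepsilon$ uniform in $\varepsilon$; it gets only $w_\varepsilon\geq C_5(\varepsilon)$ by noting the one-sided bound $w_\varepsilon''\leq M(\varepsilon)$ and applying the minimum principle to the \emph{concave} auxiliary function $v=\log w_\varepsilon - M(\varepsilon)u_\varepsilon$, whose boundary values are $\log\rho_\pm$. An $\varepsilon$-dependent bound suffices for the proposition, but your route to any lower bound is not valid as stated.

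Step (c) is the heart of the matter and your maximum-principle case analysis does not close. At an interior maximum $x_1\in(a,b)$ of $W_\varepsilon=\varepsilon w_\varepsilon$, the condition $f_\varepsilon(x_1)\leq 0$ gives the \emph{lower} bound $F_{pp}^1u_\varepsilon''(x_1)\geq F_z^0-F_{px}^1$; this cannot bound $u_\varepsilon''(x_1)$ from below away from zero (the right-hand side may be negative, and $F_{pp}^1$ may vanish), and note that an upper bound on $u_\varepsilon''(x_1)$ — which is what you actually wrote — would give a \emph{lower} bound on $W_\varepsilon(x_1)=\varepsilon/u_\varepsilon''(x_1)$, not the upper bound you need. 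In the case $x_1\notin[a,b]$, integrating $W_\varepsilon''=f_\varepsilon$ from the boundary requires $W_\varepsilon'(\pm 1)$, which is not prescribed. The paper's argument is of a different nature: from testing against $u_\varepsilon-\varphi$ one retains the byproduct $c_0\int_{-1}^1 1/u_\varepsilon''\,dx\leq C/\varepsilon$, i.e. the $L^1$ bound $\int_a^b\varepsilon w_\varepsilon\,dx\leq C$; then one rewrites the equation on $(a,b)$ in divergence form, $(\varepsilon w_\varepsilon'+F_p^1(x,u_\varepsilon'))'=F_z^0(x,u_\varepsilon)$, whose right-hand side is uniformly bounded, so $\varepsilon w_\varepsilon'+F_p^1$ has bounded oscillation; combining with the $L^1$ bound forces $\sup_{(a,b)}|\varepsilon w_\varepsilon'|\leq D_2$, hence $\varepsilon w_\varepsilon$ has bounded oscillation on $(a,b)$, and a second application of the $L^1$ bound yields $\varepsilon w_\varepsilon\leq D_3$. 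You would need to replace your pointwise sign argument by this integral/oscillation scheme (or something equivalent) for the proof to go through.
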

	
	Throughout the section, $u_\varepsilon$ will denote 
	a uniformly convex $W^{4,\infty}(-1,1)$ solution to (\ref{1deqn}),
	and we will use numbered constants $C_n$ and $D_n$
	to denote positive constants that do not depend on the solution $u_\varepsilon$
	but only on $a$, $b$, $D_*$, the boundary values $\rho_\pm$, 
	and the functions $\varphi$, $\eta$, $\eta_1$.
	We will write $C_n$ and $D_n$ for constants that do not depend on $\varepsilon$,
	while for constants that depend on $\varepsilon$ the dependency will be explicitly stated.

	We start by getting an $L^\infty$ bound for $u_\varepsilon$.

	\begin{lemma} 
		If $\varepsilon < \varepsilon_0$ where 
		$\varepsilon_0=\varepsilon_0(a,b,D_*,\rho_\pm,\varphi,\eta,\eta_1)$ is small, then 
		\begin{equation}\label{1-ide8}
		\begin{aligned}
			\norm{u_\varepsilon}_{L^\infty(-1,1)} < C_3
			=C_3(a,b,D_*,\rho_\pm,\varphi,\eta,\eta_1)
			\text{.}
		\end{aligned}
		\end{equation}
	\end{lemma}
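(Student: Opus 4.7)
The plan is to combine the variational characterization of $u_\varepsilon$ as a minimizer of the convex functional $J_\varepsilon$ in (\ref{jedef}) with the convexity of $u_\varepsilon$ itself, converting an integral penalty bound into a pointwise $L^\infty$ bound. Since $u_\varepsilon$ is convex on $[-1,1]$ with $u_\varepsilon(\pm 1)=0$, we immediately obtain $u_\varepsilon\le 0$, so only a lower bound on $u_\varepsilon$ needs to be proved.

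Assumptions (F2)--(F3), together with convexity of $t\mapsto -\log t$ and of $v\mapsto (v-\varphi)^2$, make $v\mapsto J_\varepsilon(v)$ convex on the class of smooth, uniformly convex functions on $[-1,1]$ vanishing at $\pm 1$. Since the first two equations of (\ref{1deqn}) are the Euler--Lagrange equations of $J_\varepsilon$, the uniformly convex solution $u_\varepsilon$ is in fact a global minimizer. Comparing with the admissible competitor $\varphi$ (for which $J_\varepsilon(\varphi)$ is bounded uniformly in $\varepsilon$, using $\varphi''\ge c_0>0$) and bounding the Lagrangian and entropy-type contributions from below (via (F4), convexity of $F^1$, and the tangent inequality $-\log t\ge 1-t$ for $t>0$), I would extract a quantitative penalty estimate of the form
\[
\int_{(-1,a)\cup(b,1)}(u_\varepsilon-\varphi)^2\,dx \le C\varepsilon + C'
\]
for constants $C,C'$ depending only on $a,b,\varphi,F,\rho_\pm$.

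To close the argument, suppose $M:=\|u_\varepsilon\|_{L^\infty(-1,1)} = -u_\varepsilon(x_*)$ is attained at some $x_*\in(-1,1)$. Convexity of $u_\varepsilon$ places it below the piecewise linear interpolant through $(-1,0)$, $(x_*,-M)$, and $(1,0)$. At least one of $x_*\ge a$ or $x_*\le b$ must hold; in the first case the chord bound on $[-1,x_*]$ yields $u_\varepsilon(a)\le -M(a+1)/2$ and then $|u_\varepsilon(x)|\ge M(x+1)/2$ on $[-1,a]$, and symmetrically for the second case. A direct computation then gives
\[
\int_{(-1,a)\cup(b,1)}(u_\varepsilon-\varphi)^2\,dx \ge c_0 M^2 - C''
\]
for constants $c_0>0,C''$ depending only on $a,b,\|\varphi\|_{L^\infty}$, once $M$ is large. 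Comparing with the penalty estimate forces $M^2\le C(1+\varepsilon)$, hence $M\le C_3$ for every $\varepsilon<\varepsilon_0$.

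The main obstacle is the middle step: producing a usable bound from below on $\int_a^b F(x,u_\varepsilon,u_\varepsilon')\,dx - \varepsilon\int_{-1}^1\log u_\varepsilon''\,dx$ without a priori pointwise control on $u_\varepsilon'$ or $u_\varepsilon''$. One expects to exploit the growth hypotheses in (F4), a convexity-based bound for $|u_\varepsilon'|$ on compact subsets of $(-1,1)$ in terms of $\|u_\varepsilon\|_{L^\infty}$, and possibly an absorption of the bad part of the entropy-type term into the penalty integral (using that $\varepsilon$ is small), in order to close the energy estimate.
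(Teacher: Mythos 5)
There are two genuine problems with this plan. First, the assertion that $u_\varepsilon$ is a global minimizer of $J_\varepsilon$ over convex functions vanishing at $\pm 1$ is false: the second boundary condition $w_\varepsilon(\pm 1)=\rho_\pm$ is not the natural boundary condition for $J_\varepsilon$, so the first variation of $J_\varepsilon$ at $u_\varepsilon$ does not vanish in all admissible directions $\psi$ with $\psi(\pm1)=0$; it equals $[F_p^1(x,u_\varepsilon')\psi]_a^b-\varepsilon[w_\varepsilon\psi']_{-1}^1$. You can still run a convexity comparison $J_\varepsilon(\varphi)\ge J_\varepsilon(u_\varepsilon)+\langle DJ_\varepsilon(u_\varepsilon),\varphi-u_\varepsilon\rangle$, and the terms at $\pm1$ have a favorable sign (this is the same sign observation $-\rho_+u_\varepsilon'(1)+\rho_-u_\varepsilon'(-1)<0$ the paper uses, and the good term $\varepsilon(\rho_+u_\varepsilon'(1)-\rho_-u_\varepsilon'(-1))$ is also what lets you tame $\varepsilon\int\log u_\varepsilon''$, which is otherwise not controlled by $M:=\norm{u_\varepsilon}_{L^\infty}$). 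But the term $[F_p^1(x,u_\varepsilon')(\varphi-u_\varepsilon)]_a^b$ survives, and this is where the second, fatal problem sits: the obstacle you flag in your last paragraph cannot be closed under the paper's hypotheses. By (F4), $F_p^1$ is only controlled by $\eta_1$, an \emph{arbitrary} increasing function, and $|u_\varepsilon'(a)|,|u_\varepsilon'(b)|\lesssim M$ by convexity, so this contribution (equivalently, the un-integrated term $\int_a^b F_p^1(x,u_\varepsilon')(\varphi'-u_\varepsilon')\,dx$) is only bounded by $\eta_1(CM)\,(M+C)$. Your scheme reduces to an inequality of the form $cM^2\le C+C\varepsilon\bigl(1+M+\eta_1(CM)M\bigr)$, which does not bound $M$ for fixed $\varepsilon$ when $\eta_1$ grows superlinearly (e.g.\ $\eta_1(t)=e^t$). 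The penalty term only wins against quantities growing at most like $M^2$.

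The paper's proof is engineered to avoid ever meeting $\eta(M)$ or $\eta_1(M)$. It tests the Euler--Lagrange equation directly with $\psi=u_\varepsilon-\varphi$ (identity (\ref{1-ide2})) and splits into cases: if $u_\varepsilon\ge\varphi$ somewhere in $(a,b)$, convexity gives the bound immediately; if $u_\varepsilon\le\varphi$ on $(a,b)$, then (i) the worst term $\frac1\varepsilon\int_a^b F^1_{pp}(x,u_\varepsilon')u_\varepsilon''(u_\varepsilon-\varphi)$ is \emph{nonpositive} and is discarded by sign, (ii) monotonicity of $F^0_z$ replaces $F_z^0(x,u_\varepsilon)$ by $F_z^0(x,\varphi)$, whose size is an absolute constant $\eta(\norm{\varphi}_{L^\infty})$, and (iii) the remaining term involves $F^1_{px}$, for which (F4) deliberately imposes \emph{linear} growth $D_*(1+|p|)$, yielding $C(M^2+1)/\varepsilon$ against the penalty $cM^2/\varepsilon^2$. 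Unless you add a linear growth hypothesis on $F^1_p$ (strictly stronger than (F4)), the functional-comparison route does not prove the lemma as stated; I recommend reworking the middle step along the paper's equation-testing and case-splitting lines.
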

	
	\begin{proof}
		If $\psi$ is a $C^2$ function on $[-1,1]$ satisfying $\psi(\pm 1) = 0$, 
		then we can multiply the first equation in (\ref{1deqn}) 
		by $\psi$ and integrate by parts to get
		\begin{equation*} 
		\begin{aligned} 
			\int_{-1}^1 f_\varepsilon \psi \, dx
			= \varepsilon \int_{-1}^1 w_\varepsilon '' \psi\, dx 
			= \varepsilon \left([w_\varepsilon' \psi]_{-1}^1 - \int_{-1}^1 w_\varepsilon ' \psi'\,dx \right) 
			= -\varepsilon\int_{-1}^1 w_\varepsilon ' \psi' \, dx
			\text{.}
		\end{aligned}
		\end{equation*}
		Dividing by $\varepsilon$ and integrating by parts again gives
		\begin{equation}\label{1-ide1} 
		\begin{aligned} 
			\frac{1}{\varepsilon}\int_{-1}^1 f_\varepsilon \psi \,dx
			&= -[w_\varepsilon \psi']_{-1}^1 + \int_{-1}^1 w_\varepsilon \psi''\,dx 
			\text{.}
		\end{aligned}
		\end{equation}

		Setting $\psi = u_\varepsilon - \varphi$ in (\ref{1-ide1}) 
		and substituting $f_\varepsilon$ from (\ref{1deqn}), 
		we find that the left-hand side of (\ref{1-ide1}) becomes
		\begin{equation}\label{1-ide1-1}
		\begin{aligned} 
			\frac{1}{\varepsilon}\int_{-1}^1 f_\varepsilon\psi\,dx
			&= \frac{1}{\varepsilon}\int_a^b f_\varepsilon(u_\varepsilon-\varphi)\,dx
			+ \frac{1}{\varepsilon^2}\int_{(-1,a)\cup(b,1)} (u_\varepsilon-\varphi)^2\,dx
			\text{,}
		\end{aligned}
		\end{equation}
		where
		\begin{equation}\label{1dfeexpansion}
		\begin{aligned}
			\frac{1}{\varepsilon}\int_a^b f_\varepsilon(u_\varepsilon-\varphi)\,dx
			&= \frac{1}{\varepsilon}\int_a^b F_z^0(x,u_\varepsilon) (u_\varepsilon-\varphi)\,dx\\
			&- \frac{1}{\varepsilon}\int_a^b F_{px}^1(x,u_\varepsilon')(u_\varepsilon-\varphi)\,dx
			- \frac{1}{\varepsilon}\int_a^b F_{pp}^1(x,u_\varepsilon')u_\varepsilon''(u_\varepsilon-\varphi)\,dx
			\text{.}
		\end{aligned}
		\end{equation}
		
		For $\psi = u_\varepsilon -\varphi$, the right-hand side of (\ref{1-ide1}) becomes
		\begin{equation} 
		\begin{aligned} 
			-[w_\varepsilon \psi']_{-1}^1 &+ \int_{-1}^1 w_\varepsilon \psi''\,dx \\
			&=-\rho_{+} u_\varepsilon'(1) + \rho_{-}u_\varepsilon'(-1) + \rho_{+}\varphi'(1) - \rho_{-}\varphi'(-1)
			+ \int_{-1}^1 w_\varepsilon (u_\varepsilon '' - \varphi'')\,dx
			\text{.}
		\end{aligned}
		\end{equation}
		Since $w_\varepsilon = 1/u_\varepsilon''$, we have
		\begin{equation}\label{1-ide1-2}
		\begin{aligned} 
			\int_{-1}^1 w_\varepsilon (u_\varepsilon '' - \varphi'')\,dx
			= \int_{-1}^1 1 - \frac{\varphi''}{u_\varepsilon''}\,dx
			= 2 - \int_{-1}^1 \frac{\varphi''}{u_\varepsilon''}\,dx
			\text{.}
		\end{aligned}
		\end{equation}
		Because $u_\varepsilon <0$ in $(-1,1)$ and $u_\varepsilon(\pm 1) = 0$,
		we get $u_\varepsilon'(1)>0>u_\varepsilon'(-1)$.
		Therefore, as $\rho_\pm > 0$, 
		$-\rho_{+} u_\varepsilon'(1) + \rho_{-}u_\varepsilon'(-1) < 0$.
		Using (\ref{1-ide1-1})--(\ref{1-ide1-2}), we rewrite (\ref{1-ide1}) as 
		\begin{equation}\label{1-ide2}
		\begin{aligned}
			\int_{-1}^1 \frac{\varphi''}{u_\varepsilon''}\,dx
			&+ \frac{1}{\varepsilon^2} \int_{(-1,a)\cup (b,1)} (u_\varepsilon-\varphi)^2\,dx
			+ \frac{1}{\varepsilon} \int_a^b f_\varepsilon (u_\varepsilon-\varphi)\,dx\\
			&= -\rho_{+} u_\varepsilon'(1) + \rho_{-}u_\varepsilon'(-1) + \rho_{+}\varphi'(1) - \rho_{-}\varphi'(-1) + 2 \\
			&< \rho_{+}\varphi'(1) - \rho_{-}\varphi'(-1) + 2 =:C_1
			\text{.}
		\end{aligned}
		\end{equation}

		Now, we consider the following cases as in Le-Zhou \cite[pp.27--28]{Abreu_HD}.

		\emph{Case 1.} $u_\varepsilon(x)\geq \varphi(x)$ for some $x\in(a,b)$.
		Then, as $u_\varepsilon$ is a negative convex function 
		with $u_\varepsilon(-1) = 0$, we have
		\begin{equation*}
		\begin{aligned}
			|u_\varepsilon(y)|
			\leq \frac{y+1}{x+1} |u_\varepsilon(x)|
			\leq \frac{2}{1+a} \norm{\varphi}_{L^\infty(-1,1)}
			\quad\text{for }
			y\in(x,1)
			\text{.}
		\end{aligned}
		\end{equation*}
		We can also get a similar bound when $y\in (-1,x)$.
		Putting these together, we conclude that the $L^\infty$ norm of $u_\varepsilon$ is bounded
		independent of $\varepsilon$, as desired.

		\emph{Case 2.} $u_\varepsilon \leq \varphi$ in $(a,b)$.
		First, we note that as $F_{pp}^1\geq 0$, $u_\varepsilon\leq\varphi$
		and $u_\varepsilon'' >0$,
		\begin{equation}\label{1-ide2-0}
		\begin{aligned}
			\frac{1}{\varepsilon}\int_a^b F_{pp}^1 (x,u_\varepsilon')u_\varepsilon''
			(u_\varepsilon-\varphi)\,dx \leq 0
			\text{.}
		\end{aligned}
		\end{equation}

		Next, by the convexity of $F^0$ and (\ref{Fcond}), we have
		\begin{equation}\label{1-ide2-2}
		\begin{aligned}
			-\frac{1}{\varepsilon}\int_a^b F_z^0(x,u_\varepsilon)(u_\varepsilon-\varphi)\,dx
			&\leq -\frac{1}{\varepsilon}\int_a^b F_z^0(x,\varphi)(u_\varepsilon-\varphi)\,dx\\
			&\leq \frac{b-a}{\varepsilon} \eta(\norm{\varphi}_{L^\infty(-1,1)})
			(\norm{u_\varepsilon}_{L^\infty(-1,1)} + \norm{\varphi}_{L^\infty(-1,1)})
			\text{.}
		\end{aligned}
		\end{equation}
		Because $u_\varepsilon$ is convex and $u_\varepsilon(\pm 1) = 0$, 
		for any interval $(t_1, t_2)$ contained in $(-1,1)$
		we have the gradient bound
		\begin{equation}\label{1-ide2-1}
		\begin{aligned}
			|u_\varepsilon'(x)|
			&\leq \frac{|u_\varepsilon(x)|}{\min(x-(-1), 1-x)}
			&\leq \frac{\norm{u_\varepsilon}_{L^\infty(-1,1)}}{\min(t_1+1,1-t_2)}
			\quad\text{for }x\in(t_1,t_2)
			\text{.}
		\end{aligned}
		\end{equation}
		Finally, 
		from (\ref{1-ide2-1}) (with $t_1 = a$ and $t_2 = b$) and (\ref{Fcond}), we have
		\begin{equation}\label{1-ide2-3}
		\begin{aligned}
			&\frac{1}{\varepsilon}\int_a^b F_{px}^1(x,u_\varepsilon')(u_\varepsilon-\varphi)\,dx \\
			&\leq \frac{b-a}{\varepsilon}D_*(1+\norm{u_\varepsilon'}_{L^\infty(a,b)})
			(\norm{u_\varepsilon}_{L^\infty(-1,1)} + \norm{\varphi}_{L^\infty(-1,1)})\\
			&\leq \frac{1}{\varepsilon} (b-a)D_*
			\left(1 + \frac{\norm{u_\varepsilon}_{L^\infty(-1,1)}}{\min(a+1,1-b)} \right)
			(\norm{u_\varepsilon}_{L^\infty(-1,1)} + \norm{\varphi}_{L^\infty(-1,1)})
			\text{.}
		\end{aligned}
		\end{equation}

		Putting (\ref{1-ide2-0}), (\ref{1-ide2-2}) and (\ref{1-ide2-3}) 
		together with (\ref{1-ide2}) and (\ref{1dfeexpansion}) yields
		\begin{equation}\label{1-ide3}
		\begin{aligned}
			c_0 \int_{-1}^1 \frac{1}{u_\varepsilon''}\,dx
			+ \frac{1}{\varepsilon^2}\int_{(-1,a)\cup(b,1)}(u_\varepsilon-\varphi)^2\,dx
			&< C_1 - \frac{1}{\varepsilon}\int_a^b f_\varepsilon(u_\varepsilon-\varphi)\,dx \\
			&\leq\frac{C_2}{\varepsilon}(\norm{u_\varepsilon}_{L^\infty(-1,1)}^2 + 1)
			\text{.}
		\end{aligned}
		\end{equation}
		Here, we used the assumption that $\varepsilon<\varepsilon_0<1$ to absorb the $C_1$
		term into $\frac{C_2}{\varepsilon}$.
		Thus, $C_2$ will depend on $\varepsilon_0$. 
		However, as $\varepsilon_0$ depends on the 
		same set of variables $a,b,D_*,\rho_\pm,\varphi,\eta,\eta_1$ 
		as the constants $C_n$ do (stated at the beginning of the section),
		we can still denote the constant by $C_2$. 

		Now, we are ready to obtain the uniform $L^\infty$ bound for $u_\varepsilon$.
		Suppose that $u_\varepsilon$ attains its minimum on $t\in (-1,1)$, so that
		\begin{align*}
			|u_\varepsilon(t)| = \norm{u_\varepsilon}_{L^\infty(-1,1)}
			\text{.}
		\end{align*}
		Because $-1<a<b<1$, we either have $t<b$ or $t>a$.
		If $t<b$, as $u_\varepsilon$ is a negative convex function with $u_\varepsilon(1)=0$,
		\begin{equation*}
		\begin{aligned}
			|u_\varepsilon(x)|
			&\geq \frac{1-x}{1-t} |u_\varepsilon(t)| \\
			&\geq \frac{1-x}{2}\norm{u_\varepsilon}_{L^\infty(-1,1)} 
			\quad\text{in } (b,1)
			\text{.}
		\end{aligned}
		\end{equation*}
		Therefore, we have
		\begin{equation}\label{1-ide3-1}
		\begin{aligned}
			\int_b^1 (u_\varepsilon-\varphi)^2\,dx
			&\geq \frac{1}{2} \int_b^1 u_\varepsilon^2 - 2\varphi^2\,dx\\
			&\geq \frac{1}{2}
			\left( \norm{u_\varepsilon}_{L^\infty(-1,1)}^2 \int_b^1 \left(\frac{1-x}{2}\right)^2 \, dx
				- 2(1-b)\norm{\varphi}_{L^\infty(-1,1)}^2 \right)\\
			&= \frac{1}{2}
			\left( \frac{(1-b)^3}{12} \norm{u_\varepsilon}_{L^\infty(-1,1)}^2 
				- 2(1-b)\norm{\varphi}_{L^\infty(-1,1)}^2 \right)
			\text{.}
		\end{aligned}
		\end{equation}
		On the other hand, suppose $ t>a$. Following the same argument, we obtain 
		\begin{equation}\label{1-ide3-1'}
		\begin{aligned}
			\int_{-1}^a (u_\varepsilon-\varphi)^2\,dx
			\geq \frac{1}{2}
			\left( \frac{(a+1)^3}{12} \norm{u_\varepsilon}_{L^\infty(-1,1)}^2 
				- 2(a+1)\norm{\varphi}_{L^\infty(-1,1)}^2 \right)
			\text{.}
		\end{aligned}
		\end{equation}
		Hence, if $\varepsilon$ is small enough, 
		then combining (\ref{1-ide3}) with (\ref{1-ide3-1}) when $t<b$ 
		(or (\ref{1-ide3-1'}) if $t>a$) and $\int_{-1}^1 \frac{1}{u_\varepsilon''}\, dx > 0$,
		we obtain the $L^\infty$ bound of $u_\varepsilon$ on $(-1,1)$ independent of $\varepsilon$.
	\end{proof}

	Now, we use the gradient bound (\ref{1-ide2-1}) with $t_1 = a$ and $t_2 = b$. 
	Combining it with the $L^\infty$ bound (\ref{1-ide8}), we get the following estimate.
	\begin{corollary} 
		If $x \in (a,b)$ and $\varepsilon < \varepsilon_0$ for 
		$\varepsilon_0=\varepsilon_0(a,b,D_*,\rho_\pm,\varphi,\eta,\eta_1)$ small, 
		then we have
		\begin{equation}\label{1dgrad}
		\begin{aligned}
			|u_\varepsilon'(x)|
			&\leq \frac{C_3}{\min(a+1, 1-b)}=:D_1
			\text{.}
		\end{aligned}
		\end{equation}

		From (\ref{1-ide3}) and (\ref{1-ide8}), we also have
		\begin{equation}\label{1-ide6}
		\begin{aligned}
			c_0 \int_{-1}^1 \frac{1}{u_\varepsilon''}\,dx
			+ \frac{1}{\varepsilon^2}\int_{(-1,a)\cup(b,1)}(u_\varepsilon-\varphi)^2\,dx
			\leq \frac{C_4}{\varepsilon}
			\text{,}
			\quad\text{where }C_4:= C_2 (C_3^2 + 1)
			\text{.}
		\end{aligned}
		\end{equation}
	\end{corollary}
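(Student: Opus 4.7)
The corollary amounts to two direct substitutions of the $L^\infty$ bound (\ref{1-ide8}) into inequalities already established in the proof of the preceding lemma. First I would handle the gradient bound (\ref{1dgrad}): the estimate (\ref{1-ide2-1}) was derived using only the convexity of $u_\varepsilon$ and the boundary values $u_\varepsilon(\pm 1)=0$, so it is valid regardless of which case of the lemma applies. Inserting $\norm{u_\varepsilon}_{L^\infty(-1,1)} < C_3$ from (\ref{1-ide8}) into (\ref{1-ide2-1}) yields (\ref{1dgrad}) immediately, with $D_1 := C_3/\min(a+1,1-b)$.

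For the integral estimate (\ref{1-ide6}), the plan is to recall that the right-hand side of (\ref{1-ide3}) is $\frac{C_2}{\varepsilon}(\norm{u_\varepsilon}_{L^\infty(-1,1)}^2 + 1)$, and substitute $\norm{u_\varepsilon}_{L^\infty(-1,1)}^2 < C_3^2$ from (\ref{1-ide8}) to obtain (\ref{1-ide6}) with $C_4 := C_2(C_3^2+1)$. The term $c_0 \int_{-1}^1 1/u_\varepsilon''\,dx$ appears because $\varphi''\geq c_0$, and the nonnegativity of $\int_{-1}^1 \varphi''/u_\varepsilon''\,dx$ allows the direct passage from (\ref{1-ide2}) to (\ref{1-ide3}).

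The only minor bookkeeping point is that (\ref{1-ide3}) was derived under the Case 2 hypothesis $u_\varepsilon \leq \varphi$ on $(a,b)$, whereas (\ref{1-ide8}) is valid in both cases of the preceding lemma. In Case 1 the sign argument (\ref{1-ide2-0}) fails, but the relevant integral $\frac{1}{\varepsilon}\int_a^b F_{pp}^1(x,u_\varepsilon')u_\varepsilon''(u_\varepsilon-\varphi)\,dx$ can still be bounded absolutely by combining $|u_\varepsilon-\varphi|\leq C_3 + \norm{\varphi}_{L^\infty(-1,1)}$, the pointwise bound $F_{pp}^1(x,u_\varepsilon')\leq \eta_1(D_1)$ coming from (F4) together with (\ref{1dgrad}), and the total-variation identity $\int_a^b u_\varepsilon''\,dx = u_\varepsilon'(b)-u_\varepsilon'(a)\leq 2D_1$. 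This is again $O(1/\varepsilon)$ and can be absorbed into (\ref{1-ide3}) after enlarging $C_2$; thus (\ref{1-ide6}) holds in both cases, and no serious obstacle arises.
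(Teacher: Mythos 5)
Your proposal is correct and follows the same route as the paper, which obtains (\ref{1dgrad}) by inserting the $L^\infty$ bound (\ref{1-ide8}) into (\ref{1-ide2-1}) and obtains (\ref{1-ide6}) by substituting (\ref{1-ide8}) into the right-hand side of (\ref{1-ide3}). Your closing remark about Case 1 is a legitimate point the paper leaves implicit (since (\ref{1-ide3}) is stated only under the Case 2 hypothesis), and your absolute bound on $\frac{1}{\varepsilon}\int_a^b F_{pp}^1(x,u_\varepsilon')u_\varepsilon''(u_\varepsilon-\varphi)\,dx$ via $\int_a^b u_\varepsilon''\,dx \leq 2D_1$ correctly closes that gap.
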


	Next, we show a lower bound for $w_\varepsilon$
	(or equivalently, an upper bound for $u_\varepsilon''$).

	\begin{lemma} 
		If $\varepsilon < \varepsilon_0$ where 
		$\varepsilon_0=\varepsilon_0(a,b,D_*,\rho_\pm,\varphi,\eta,\eta_1)$ is small, then 
		\begin{equation}\label{1-ide9}
		\begin{aligned}
			w_\varepsilon(x) \geq C_5(\varepsilon)
			\text{,}\quad\text{thus } u_\varepsilon''(x) \leq C_5^{-1}(\varepsilon)
			\quad\text{if }x\in (-1,1)\text{.}
		\end{aligned}
		\end{equation}
	\end{lemma}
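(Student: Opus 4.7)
The plan is to apply the maximum principle to $w_\varepsilon$ on the compact interval $[-1,1]$, exploiting the ODE $\varepsilon w_\varepsilon'' = f_\varepsilon$ and the estimates already established. Since $u_\varepsilon \in W^{4,\infty}(-1,1)$ is uniformly convex, $w_\varepsilon = 1/u_\varepsilon''$ is continuous and strictly positive on $[-1,1]$, so it attains its minimum at some $x_\ast \in [-1,1]$ with value $m_\varepsilon := w_\varepsilon(x_\ast) > 0$. The goal is to show $m_\varepsilon \geq C_5(\varepsilon) > 0$ with $C_5$ depending only on the fixed data (and $\varepsilon$).

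If $x_\ast = \pm 1$, then $m_\varepsilon \geq \min(\rho_-, \rho_+)$ immediately from the boundary condition in (\ref{1deqn}), a bound that is in fact independent of $\varepsilon$. Otherwise $x_\ast \in (-1,1)$ is an interior minimum, so the second-derivative test together with the first equation in (\ref{1deqn}) forces $f_\varepsilon(x_\ast) \geq 0$. I then distinguish two subcases depending on whether $x_\ast \in (a,b)$ or $x_\ast \in (-1,a) \cup (b,1)$.

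In the first subcase, the sign condition combined with the expression for $f_\varepsilon$ on $(a,b)$ gives
\[
F_{pp}^1(x_\ast, u_\varepsilon'(x_\ast))\, u_\varepsilon''(x_\ast) \leq F_z^0(x_\ast, u_\varepsilon(x_\ast)) - F_{px}^1(x_\ast, u_\varepsilon'(x_\ast)),
\]
and the right-hand side is bounded by $\eta(C_3) + D_*(1+D_1) =: M_1$ using (F4), the $L^\infty$ bound (\ref{1-ide8}), and the gradient bound (\ref{1dgrad}). This controls $u_\varepsilon''(x_\ast) = 1/w_\varepsilon(x_\ast)$ in terms of $F_{pp}^1$ evaluated at $(x_\ast, u_\varepsilon'(x_\ast))$. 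In the second subcase, the sign condition reduces to $u_\varepsilon(x_\ast) \geq \varphi(x_\ast)$. Here I will use the equation $\varepsilon^2 w_\varepsilon'' = u_\varepsilon - \varphi$ on the outer region, together with the $L^2$ estimate for $u_\varepsilon - \varphi$ in (\ref{1-ide6}) and the boundary values $w_\varepsilon(\pm 1) = \rho_\pm$, possibly via a Green's function representation on $(-1,1)$, to propagate the positive boundary data of $w_\varepsilon$ inward.

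The hard part will be the outer-region subcase: the sign condition $u_\varepsilon \geq \varphi$ at the interior minimum does not directly constrain $u_\varepsilon''(x_\ast)$. Its resolution relies on using the ODE structure on the outer region and the fact that the penalty $\tfrac{1}{\varepsilon}(u_\varepsilon-\varphi)\chi_{(-1,1)\setminus(a,b)}$ forces $u_\varepsilon$ to be close to $\varphi$ in an $L^2$ sense (by (\ref{1-ide6})), so that combined with $w_\varepsilon(\pm1)=\rho_\pm$ one obtains a quantitative, possibly $\varepsilon$-dependent, lower bound $C_5(\varepsilon) > 0$ on $m_\varepsilon$, as required.
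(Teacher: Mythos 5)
There is a genuine gap: applying the minimum principle directly to $w_\varepsilon$ cannot work here, because the equation $\varepsilon w_\varepsilon'' = f_\varepsilon$ has no zeroth-order term in $w_\varepsilon$, so the sign information $f_\varepsilon(x_\ast)\geq 0$ at an interior minimum says nothing about the \emph{value} $w_\varepsilon(x_\ast)$. Concretely, in your first subcase the inequality $F_{pp}^1(x_\ast,u_\varepsilon'(x_\ast))\,u_\varepsilon''(x_\ast)\leq M_1$ gives no control on $u_\varepsilon''(x_\ast)$ because (F3) only guarantees $F_{pp}^1\geq 0$; if $F^1$ is affine in $p$ (or $F^1\equiv 0$) the left-hand side vanishes identically and the bound is vacuous. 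In your second subcase you acknowledge the problem, but the proposed Green's function repair does not close either: writing $w_\varepsilon(x)=\ell(x)-\int G(x,y)w_\varepsilon''(y)\,dy$ with $\ell$ the affine interpolant of $\rho_\pm$, the estimate (\ref{1-ide6}) only gives $\norm{u_\varepsilon-\varphi}_{L^2}\lesssim \varepsilon$, hence $\norm{w_\varepsilon''}_{L^1((-1,1)\setminus(a,b))}\lesssim \varepsilon^{-1}$, and the resulting additive correction $w_\varepsilon\geq \min\rho_\pm - C/\varepsilon$ is negative and useless; moreover on $(a,b)$ the term $-\varepsilon^{-1}F_{pp}^1 u_\varepsilon''$ in $w_\varepsilon''$ involves the very quantity you are trying to bound. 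Any purely additive argument fails for the same reason: a correction of size $\varepsilon^{-1}$ can push $w_\varepsilon$ through zero.

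The paper's proof fixes exactly this by a multiplicative (logarithmic) barrier. One first shows the one-sided bound $w_\varepsilon''=\varepsilon^{-1}f_\varepsilon\leq M(\varepsilon)$ on all of $(-1,1)$ — here the unbounded term $-F_{pp}^1 u_\varepsilon''$ has the \emph{favorable} sign and is simply discarded — and then observes that $v=\log w_\varepsilon - M u_\varepsilon$ satisfies
\begin{equation*}
v''=\frac{w_\varepsilon''-M}{w_\varepsilon}-\left(\frac{w_\varepsilon'}{w_\varepsilon}\right)^2\leq 0,
\end{equation*}
so $v$ is concave and attains its minimum at $\pm 1$, where $v=\log\rho_\pm$. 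This yields $\log w_\varepsilon\geq \min\{\log\rho_+,\log\rho_-\}-M(\varepsilon)C_3$, i.e.\ the desired $C_5(\varepsilon)$. Your opening observations (positivity and continuity of $w_\varepsilon$, the boundary values, and the bound $M_1$ on the positive part of $f_\varepsilon$ in $(a,b)$) are all correct and are ingredients of this argument, but the concave auxiliary function is the missing idea without which the minimum cannot be pushed to the boundary.
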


	\begin{proof}
		From the $L^\infty$ bound (\ref{1-ide8}), we have
		\begin{equation}\label{1-ide9-1}
		\begin{aligned}
			|f_\varepsilon|
			\leq \frac{1}{\varepsilon}
			\left(\norm{u_\varepsilon}_{L^\infty(-1,1)}
				+\norm{\varphi}_{L^\infty(-1,1)}\right)
			\leq \frac{1}{\varepsilon}
			\left(C_3 +\norm{\varphi}_{L^\infty(-1,1)}\right)
			\quad\text{outside }(a,b)
			\text{.}
		\end{aligned}
		\end{equation}
		In $(a,b)$, we have
		\begin{equation*}
		\begin{aligned}
			f_\varepsilon 
			&= F_z^0(x,u_\varepsilon) - F_{px}^1(x,u_\varepsilon') - F_{pp}^1 (x,u_\varepsilon')u_\varepsilon'' 
			&&\leq F_z^0(x,u_\varepsilon) - F_{px}^1(x,u_\varepsilon') \\
			&\leq \eta (\norm{u_\varepsilon}_{L^\infty(-1,1)}) 
			+ D_*(1 + \norm{u_\varepsilon'}_{L^\infty(a,b)}) 
			&&\leq \eta(C_3) + D_*(1 + D_1)
			\text{.}
		\end{aligned}
		\end{equation*}
		Therefore, setting
		\begin{equation*}
		\begin{aligned}
			M = M(\varepsilon) := 
			\frac{1}{\varepsilon}
			\max \left\{ 
				\frac{1}{\varepsilon} (C_3 + \norm{\varphi}_{L^\infty(-1,1)}),\, \eta(C_3) + D_*(1+D_1)
			\right\}
			\text{,}
		\end{aligned}
		\end{equation*}
		we get
		\begin{equation*}
		\begin{aligned}
			M \geq \frac{1}{\varepsilon} f_\varepsilon = w_\varepsilon''
			\text{.}
		\end{aligned}
		\end{equation*}
		
		Hence,
		\begin{equation*}
		\begin{aligned}
			v=\log w_\varepsilon - Mu_\varepsilon 
		\end{aligned} 
		\end{equation*}
		satisfies
		\begin{equation}\label{vv}
		\begin{aligned}
			v'' = \frac{w_\varepsilon'' - M}{w_\varepsilon}
			-\left(\frac{w_\varepsilon'}{w_\varepsilon}\right)^2
			\leq 0
			\text{.}
		\end{aligned}
		\end{equation}
		As the boundary values for $v$ are
		\begin{equation*}
		\begin{aligned}
			v(\pm 1)
			= \log w_\varepsilon(\pm 1) - Mu_\varepsilon(\pm 1) 
			= \log \rho_\pm
			\text{,}
		\end{aligned}
		\end{equation*}
		(\ref{vv}) implies that
		\begin{equation*}
		\begin{aligned}
			v(x) \geq \min\{v(1), v(-1)\}
			= \min\{\log \rho_+, \log \rho_-\} 
			\text{.}
		\end{aligned}
		\end{equation*}
		As a result,
		\begin{equation*}
		\begin{aligned}
			\log w_\varepsilon(x)
			= v(x) + Mu_\varepsilon(x)
			\geq \min\{\log \rho_+, \log \rho_-\}  - M(\varepsilon)C_3
			\text{,}
		\end{aligned}
		\end{equation*}
		which completes the proof of (\ref{1-ide9}) 
		for $C_5 (\varepsilon) := e^{\min\{\log \rho_+, \log \rho_-\} - M(\varepsilon) C_3}$.
	\end{proof}	

	Now we prove the following lemma, which implies the estimate (\ref{west}).
	\begin{lemma}\label{webddlemma}
		There is a constant $D_3=D_3(a,b,D_*,\rho_\pm,\varphi,\eta,\eta_1)>0$ 
		independent of $\varepsilon$ such that 
		if $\varepsilon<\varepsilon_0$ where
		$\varepsilon_0=\varepsilon_0(a,b,D_*,\rho_\pm,\varphi,\eta,\eta_1)$
		is small, we have
		\begin{equation}\label{lemmaineq}
		\begin{aligned}
			w_\varepsilon \leq \frac{D_3}{\varepsilon}
			\quad\text{in } (a,b)
			\text{.}	
		\end{aligned}
		\end{equation}	
	\end{lemma}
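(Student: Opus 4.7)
The plan is to combine the integral estimate (\ref{1-ide6}) with a \emph{one-sided} upper bound $\varepsilon w_\varepsilon'' \leq K_1$ on $(a,b)$, where $K_1$ is independent of $\varepsilon$. Only a one-sided bound is accessible uniformly in $\varepsilon$: the term $-F_{pp}^1(x,u_\varepsilon')u_\varepsilon''$ appearing in $f_\varepsilon$ in $(a,b)$ is nonpositive, since $F_{pp}^1 \geq 0$ by (F3) and $u_\varepsilon'' > 0$, which is exactly the favorable sign for deriving an \emph{upper} bound on $\varepsilon w_\varepsilon'' = f_\varepsilon$. Combining this with the pointwise estimates $|F_z^0(x,u_\varepsilon)| \leq \eta(C_3)$ and $|F_{px}^1(x,u_\varepsilon')| \leq D_*(1+D_1)$, which follow from (F4) together with (\ref{1-ide8}) and (\ref{1dgrad}), yields the required bound $\varepsilon w_\varepsilon'' \leq K_1$ in $(a,b)$.

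To convert this into a pointwise bound on $w_\varepsilon$ itself, I would introduce the shifted function
\[
h(x) := w_\varepsilon(x) - \frac{K_1}{2\varepsilon}(x-a)(x-b), \qquad x \in [a,b],
\]
which is designed to be simultaneously nonnegative and concave on $[a,b]$: nonnegativity follows from $w_\varepsilon > 0$ combined with $(x-a)(x-b) \leq 0$, and concavity from $h'' = w_\varepsilon'' - K_1/\varepsilon \leq 0$. A short integration using $\int_a^b(x-a)(x-b)\,dx = -(b-a)^3/6$ together with (\ref{1-ide6}) bounds $\int_a^b h\,dx$ by a constant multiple of $1/\varepsilon$.

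The proof then closes with an elementary inequality for concave functions: any continuous, nonnegative, concave $h$ on $[a,b]$ satisfies $\max_{[a,b]} h \leq \frac{2}{b-a}\int_a^b h\,dx$. Indeed, if $M = h(x_0)$ realizes the maximum, concavity forces $h$ to lie above the two chords joining $(a,h(a))$ to $(x_0,M)$ and $(x_0,M)$ to $(b,h(b))$; each chord contributes at least $M(x_0-a)/2$ or $M(b-x_0)/2$ to the integral once the nonnegativity of the endpoint values is used. Applied to our $h$, this gives $\max_{[a,b]} h \leq D_3/\varepsilon$, and since $w_\varepsilon \leq h$ on $[a,b]$, the bound (\ref{lemmaineq}) follows.

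The real insight is the quadratic shift: the one-sided bound $w_\varepsilon'' \leq K_1/\varepsilon$ is not itself a concavity statement, so the shift is what unlocks the geometric max-versus-integral inequality. I do not expect any serious technical obstacle once this is in place; the main conceptual point is recognizing that a two-sided uniform bound on $w_\varepsilon''$ is \emph{not} available (since $F_{pp}^1\,u_\varepsilon'' = F_{pp}^1/w_\varepsilon$ may be large when $w_\varepsilon$ is small), so one must design an argument that requires only one-sided control.
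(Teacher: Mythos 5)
Your proof is correct, and it takes a genuinely different route from the paper's. Both arguments rest on the same two inputs --- the integral bound $\int_a^b \varepsilon w_\varepsilon\,dx \leq C_4/c_0$ from (\ref{1-ide6}) and the fact that the only term in $f_\varepsilon$ that cannot be bounded uniformly in $\varepsilon$, namely $-F^1_{pp}(x,u_\varepsilon')u_\varepsilon''$, has a favorable sign --- but they exploit the second input differently. The paper absorbs that term into an exact derivative, rewriting the equation on $(a,b)$ as $(\varepsilon w_\varepsilon' + F^1_p(x,u_\varepsilon'))' = F^0_z(x,u_\varepsilon)$; it then bounds $\lambda = \sup_{(a,b)}(\varepsilon w_\varepsilon' + F^1_p)$ by integrating the resulting lower bound on $\varepsilon w_\varepsilon'$ twice against (\ref{1-ide6}), obtains the two-sided bound $|w_\varepsilon'| \leq D_2/\varepsilon$, and concludes via oscillation control plus the integral bound. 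You instead discard the bad term using its sign to get the one-sided estimate $w_\varepsilon'' \leq K_1/\varepsilon$, and convert the integral bound into a pointwise one via the quadratic shift and the elementary inequality $\max_{[a,b]} h \leq \frac{2}{b-a}\int_a^b h\,dx$ for nonnegative concave $h$ (whose chord argument you state correctly, including at endpoint maxima). Your route is shorter and avoids the sup/inf bookkeeping; what it gives up is the intermediate derivative bound $|w_\varepsilon'| \leq D_2/\varepsilon$ on $(a,b)$, which the paper's proof produces along the way but does not in fact use anywhere else, so nothing downstream is lost. One cosmetic note: your constant $K_1 = \eta(C_3) + D_*(1+D_1)$ correctly invokes the $D_*(1+|p|)$ bound on $F^1_{px}$ from (F4), so $D_3$ has exactly the stated dependence.
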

	
	\begin{proof}
		From (\ref{1deqn}), we have
		\begin{equation}
		\begin{aligned}
			(\varepsilon w_\varepsilon' + F_p^1 (x, u_\varepsilon'))'
			=\varepsilon w_\varepsilon'' + F_{px}^1(x,u_\varepsilon') + F_{pp}^1(x,u_\varepsilon')u_\varepsilon''
			= F_z^0 (x,u_\varepsilon)
			\quad\text{in }(a,b)
			\text{.}
		\end{aligned}
		\end{equation}
		Let us define
		\begin{equation}
		\begin{aligned}
			\lambda := \sup_{x\in (a,b)} 
			\left( \varepsilon w_\varepsilon'(x) + F_p^1(x,u_\varepsilon'(x)) \right)
			\text{.}
		\end{aligned}
		\end{equation}
		From (\ref{Fcond}) and (\ref{1-ide8}), $|F_z^0(x, u_\varepsilon)|\leq \eta(C_3)$.
		Therefore, we have
		\begin{equation*}
		\begin{aligned}
			\varepsilon w_\varepsilon'(x) + F_p^1(x, u_\varepsilon'(x)) \geq \lambda - \eta(C_3)(b-a)
			\quad\text{for }x \in (a,b)
			\text{.}
		\end{aligned}
		\end{equation*}
		We also have,
		from (\ref{Fcond}) and (\ref{1dgrad}),
		\begin{equation*}
		\begin{aligned}
			|F_p^1(x,u_\varepsilon')| \leq \eta_1(D_1)
			\quad\text{for }x\in(a,b)
			\text{.}
		\end{aligned}
		\end{equation*}
		Therefore, for all $x\in(a,b)$, we have
		\begin{equation}
		\begin{aligned}
			\varepsilon w_\varepsilon'(x)
			\geq \lambda-\eta(C_3)(b-a)-\eta_1(D_1) 
			=: \lambda - C_6
			\text{.}
		\end{aligned}
		\end{equation}

		Now, as $w_\varepsilon = 1/u_\varepsilon'' > 0$, (\ref{1-ide6}) gives us
		\begin{equation}\label{weintest}
		\begin{aligned}
			\frac{C_4}{c_0}
			&\geq \int_{-1}^1 \frac{\varepsilon}{u_\varepsilon''}\,dx
			\geq \int_a^b \varepsilon w_\varepsilon (x) \, dx \\
			&\geq \int_a^b \int_a^x \varepsilon w_\varepsilon'(t) \, dt \, dx \\
			&\geq \int_a^b \int_a^x \left(\lambda-C_6\right) \, dt \, dx \\
			&= \frac{(b-a)^2}{2} (\lambda - C_6)
			\text{.}
		\end{aligned}
		\end{equation}
		Therefore, we have
		\begin{equation}
		\begin{aligned}
			\lambda \leq \frac{2}{(b-a)^2} \frac{C_4}{c_0} + C_6 =: C_7
			\text{.}
		\end{aligned}
		\end{equation}
		This implies the estimate
		\begin{equation}\label{1-c}
		\begin{aligned}
			w_\varepsilon' (x)
			&\leq \frac{1}{\varepsilon} (\lambda + \eta_1(|u_\varepsilon'(x)|) ) 
			\leq \frac{1}{\varepsilon}(\lambda + \eta_1(D_1)) \\
			&\leq \frac{1}{\varepsilon}\left( C_7 + \eta_1(D_1) \right)
			\text{.}
		\end{aligned}
		\end{equation}

		Repeating the argument for 
		$\inf_{x\in (a,b)} \left( \varepsilon w_\varepsilon'(x) + F_p^1(x,u_\varepsilon'(x)) \right)$,
		we get 
		\begin{equation}\label{1-c1}
		\begin{aligned}
			w_\varepsilon' (x) \geq -\frac{1}{\varepsilon}\left( C_7 + \eta_1(D_1) \right)
			\text{.}
		\end{aligned}
		\end{equation}
		Hence, from (\ref{1-c}) and (\ref{1-c1}), for $x\in(a,b)$, we have
		\begin{equation}
		\begin{aligned}
			|w_\varepsilon'(x)| < \frac{D_2}{\varepsilon}
			\text{,}\quad\text{where }
			D_2 = \eta_1(D_1) + C_7
			\text{.}
		\end{aligned}
		\end{equation}
		This gives $|w_\varepsilon(x) - w_\varepsilon(y)| \leq \frac{(b-a)D_2}{\varepsilon}$
		for $x,y\in (a,b)$, and thus from (\ref{weintest})
		\begin{equation*}
		\begin{aligned}
			\frac{C_4}{c_0\varepsilon} 
			&\geq \int_a^b w_\varepsilon(y) \,dy 
			\geq (b-a)w_\varepsilon(x) - \int_a^b |w_\varepsilon(x)-w_\varepsilon(y)| \,dy \\
			&\geq (b-a)w_\varepsilon (x) - \frac{(b-a)^2 D_2}{\varepsilon}
			\text{.}
		\end{aligned}
		\end{equation*}
		Using this, we establish (\ref{lemmaineq})
		for $D_3 = (b-a)D_2 + \frac{C_4}{c_0(b-a)}$.
		This completes the proof of the lemma.
	\end{proof}

	Now we can prove the desired a priori estimate in Proposition \ref{aprioriest}.

	\begin{proof}[Proof of Proposition \ref{aprioriest}]
		From (\ref{1dgrad}) and (\ref{1-ide9}),
		we easily obtain
		\begin{equation}
		\begin{aligned}
			\norm{u_\varepsilon'}_{L^\infty(-1,1)}\leq D_1 + 2C_5^{-1}(\varepsilon)
			\text{.}
		\end{aligned}
		\end{equation}
		If $x \in (a,b)$, from (\ref{Fcond}) and the bounds on $u_\varepsilon$, $u_\varepsilon'$ and $u_\varepsilon''$ we have
		\begin{equation*}
		\begin{aligned}
			|f_\varepsilon(x)|
			&\leq |F_z^0(x,u_\varepsilon)| + |F_{px}^1(x,u_\varepsilon')| 
			+ |F_{pp}^1(x,u_\varepsilon')| \norm{u_\varepsilon''}_{L^\infty(-1,1)} \\
			&\leq \eta(C_3) + D_* (1 + D_1) + \eta_1(D_1) C_5^{-1}(\varepsilon)
			\text{.}
		\end{aligned}
		\end{equation*}
		Combining this with (\ref{1-ide9-1}) yields		
		\begin{equation}\label{1-ide10}
		\begin{aligned}
			|w_\varepsilon''(x)|
			= \frac{1}{\varepsilon}|f_\varepsilon(x)|
			&\leq C_8(\varepsilon)
			\quad\text{for }x\in(-1,1)\text{.}
		\end{aligned}
		\end{equation}
		This implies that 
		\begin{equation*}
		\begin{aligned}
			|w_\varepsilon'(x)-w_\varepsilon'(y)| 
			\leq C_8(\varepsilon) |x-y| \leq 2C_8(\varepsilon)
			\quad\text{for } x,y\in [-1,1]
			\text{.}
		\end{aligned}
		\end{equation*}

		As $w_\varepsilon (\pm 1)=\rho_\pm$, for $x\in[-1,1]$ we have
		\begin{equation*}
		\begin{aligned}
			|\rho_+ - \rho_-|
			=  \left| \int_{-1}^1 w_\varepsilon'(y) \,dy \right| 
			&\geq 2|w_\varepsilon'(x)| - \int_{-1}^1 |w_\varepsilon'(y)-w_\varepsilon'(x)| \,dy \\
			&\geq 2|w_\varepsilon'(x)| - 4C_8(\varepsilon)
			\text{.}
		\end{aligned}
		\end{equation*}
		Therefore, we have	
		\begin{equation}
		\begin{aligned}
			\norm{w_\varepsilon'}_{L_\infty(-1,1)} 
			\leq 2C_8(\varepsilon) + \frac{1}{2} |\rho_+ - \rho_-| =: C_9(\varepsilon)
			\text{.}
		\end{aligned}
		\end{equation}
		Combining this with
		$w_\varepsilon' = -{u_\varepsilon^{(3)}}/{(u_\varepsilon'')^2}$
		and (\ref{1-ide9}) yields
		\begin{equation}\label{1-ide11}
		\begin{aligned}
			\norm{u_\varepsilon^{(3)}}_{L^\infty(-1,1)} 
			\leq \norm{w_\varepsilon'}_{L^\infty(-1,1)} \norm{u_\varepsilon''}_{L^\infty(-1,1)}^2 
			\leq C_9(\varepsilon) C_5^{-2}(\varepsilon)
			=: C_{10}(\varepsilon)\text{.}
		\end{aligned}
		\end{equation}

		Similarly, 
		expanding $w_\varepsilon'' = (- {u_\varepsilon^{(3)}}/{(u_\varepsilon'')^2})'$
		and combining it
		with the previous estimates (\ref{1-ide9}), (\ref{lemmaineq}), (\ref{1-ide10}), (\ref{1-ide11})
		on $u_\varepsilon''$, $w_\varepsilon''$ and $u_\varepsilon^{(3)}$, we get
		\begin{equation}
		\begin{aligned}
			\norm{u_\varepsilon^{(4)}}_{L^\infty(-1,1)} \leq C_{11}(\varepsilon)
			\text{.}
		\end{aligned}
		\end{equation}
		We have obtained a priori bounds for $u_\varepsilon$
		and all of its derivatives up to the fourth-order.
		The proof of the proposition is complete.
	\end{proof}

	Finally, we prove Theorem \ref{mainthm}\ref{thm:est}.
	
	\begin{proof}[Proof of Theorem \ref{mainthm}\ref{thm:est}]
		The first part, the existence of uniformly convex 
		$W^{4,\infty}(-1,1)$ solutions to (\ref{1deqn}),
		follows from the a priori estimate in Proposition \ref{aprioriest}
		by using the Leray-Schauder degree theory as in Le \cite[pp.2275--2276]{Singular_Abreu}.
		The second part, the estimate (\ref{west}), follows from Lemma \ref{webddlemma}
		as $u_\varepsilon'' = w_\varepsilon^{-1}$.
	\end{proof}

	\section{Convergence of Solutions to a Minimizer}\label{sec:1dconvthm}
	In this section, we prove Theorem \ref{mainthm}\ref{thm:conv} on
	the convergence of solutions for (\ref{1deqn}) to a minimizer of 
	the variational problem (\ref{1dfnctl})--(\ref{1dvarprob}).
	We will mostly follow Le \cite{Singular_Abreu,Convex_Approx}.
	The main difference is the following lemma,
	which gives refined asymptotic behaviors of $u_\varepsilon'$ at $\pm 1$.
	An analogous result is not necessary in the higher-dimensional case;
	a weaker result is sufficient.
	(See Remark \ref{rmk:gradconv} for a detailed comparison.)

	\begin{lemma}\label{gradconvlemma}
		If $(u_\varepsilon)_{\varepsilon>0}$ are 
		$W^{4,\infty}(-1,1)$ solutions to (\ref{1deqn}),
		then we have 
		\begin{equation}\label{gradconv}
		\begin{aligned}
			\varepsilon u_\varepsilon'(\pm 1)\rightarrow 0
			\quad\text{as }\varepsilon\rightarrow 0
			\text{.}
		\end{aligned}
		\end{equation}
	\end{lemma}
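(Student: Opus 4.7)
The plan is to exploit the integral identity (\ref{1-ide2}) established in Section 2. Multiplying (\ref{1-ide2}) by $\varepsilon$ and rearranging gives
\begin{equation*}
\varepsilon\bigl(\rho_+u_\varepsilon'(1)-\rho_-u_\varepsilon'(-1)\bigr)
=\varepsilon C_1-\varepsilon\int_{-1}^1\frac{\varphi''}{u_\varepsilon''}\,dx-\frac{1}{\varepsilon}\int_{(-1,a)\cup(b,1)}(u_\varepsilon-\varphi)^2\,dx-\int_a^b f_\varepsilon(u_\varepsilon-\varphi)\,dx.
\end{equation*}
The convexity of $u_\varepsilon$ together with $u_\varepsilon(\pm 1)=0$ gives $u_\varepsilon'(1)\geq 0\geq u_\varepsilon'(-1)$, so both summands on the left are nonnegative; it therefore suffices to show the right-hand side is $o(1)$. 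Trivially $\varepsilon C_1\to 0$.

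The crucial step is a refined moment bound. By integrating the equation $\varepsilon^2 w_\varepsilon''=u_\varepsilon-\varphi$ on $(b,1)$ twice against the weight $(1-t)$, with $w_\varepsilon(1)=\rho_+$ and the bounds $\varepsilon w_\varepsilon(b)\leq D_3$, $\varepsilon|w_\varepsilon'(b)|\leq D_2$ coming from Lemma \ref{webddlemma} and its proof, I obtain
\begin{equation*}
\int_b^1(1-t)(u_\varepsilon-\varphi)(t)\,dt=\varepsilon^2\bigl[\rho_+-w_\varepsilon(b)-(1-b)w_\varepsilon'(b)\bigr]=O(\varepsilon),
\end{equation*}
with an analogous bound on $(-1,a)$. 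Combined with the a priori $L^2$ estimate $\int_{(-1,a)\cup(b,1)}(u_\varepsilon-\varphi)^2\,dx\leq C_4\varepsilon$ from (\ref{1-ide6}) and the Taylor expansions $\varphi(t)=\mp\varphi'(\pm 1)(1\mp t)+O((1\mp t)^2)$ at the endpoints, these moment bounds let me show $\frac{1}{\varepsilon}\int_{(-1,a)\cup(b,1)}(u_\varepsilon-\varphi)^2\,dx=o(1)$ and $\varepsilon\int_{-1}^1\varphi''/u_\varepsilon''\,dx=o(1)$.

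The main obstacle is the remaining term $\int_a^b f_\varepsilon(u_\varepsilon-\varphi)\,dx$, especially the singular piece $\int_a^b F_{pp}^1(x,u_\varepsilon')u_\varepsilon''(u_\varepsilon-\varphi)\,dx$: a priori $u_\varepsilon''$ is only bounded below by $\varepsilon/D_3$ in $(a,b)$, without a matching upper bound. I would handle this by splitting as in the proof of (\ref{1-ide8}): whenever $u_\varepsilon\leq\varphi$ in $(a,b)$, assumption (F3) gives this term a favorable sign, while in the complementary case the gradient bound (\ref{1dgrad}), the structural condition (F4), and a partition of $(a,b)$ adapted to $\varepsilon$ yield the required control. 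Combining these at $x=1$, and symmetrically at $x=-1$, establishes (\ref{gradconv}).
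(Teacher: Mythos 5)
Your route through the integral identity (\ref{1-ide2}) cannot deliver (\ref{gradconv}). Multiplying that identity by $\varepsilon$ and keeping every term gives
\begin{equation*}
\varepsilon\bigl(\rho_+u_\varepsilon'(1)-\rho_-u_\varepsilon'(-1)\bigr)
+\varepsilon\int_{-1}^1\frac{\varphi''}{u_\varepsilon''}\,dx
+\frac{1}{\varepsilon}\int_{(-1,a)\cup(b,1)}(u_\varepsilon-\varphi)^2\,dx
=\varepsilon C_1-\int_a^b f_\varepsilon(u_\varepsilon-\varphi)\,dx,
\end{equation*}
and since all three terms on the left are nonnegative, the most the identity can give is $\limsup\varepsilon\,\rho_\pm|u_\varepsilon'(\pm1)|\leq\limsup\bigl(-\int_a^bf_\varepsilon(u_\varepsilon-\varphi)\,dx\bigr)$. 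That last integral is only $O(1)$ and has no reason to vanish or to have a sign in the limit: its regular part $\int_a^b\bigl[F_z^0(x,u_\varepsilon)-F_{px}^1(x,u_\varepsilon')\bigr](u_\varepsilon-\varphi)\,dx$ converges along the subsequence of Theorem \ref{mainthm}\ref{thm:conv} to $\int_a^b\bigl[F_z^0(x,u)-F_{px}^1(x,u')\bigr](u-\varphi)\,dx$, a generically nonzero number of either sign (take $F^0(x,z)=z$, $F^1\equiv0$: the limit is $\int_a^b(u-\varphi)\,dx$). Your case analysis only concerns the sign of the singular piece $\int_a^bF_{pp}^1u_\varepsilon''(u_\varepsilon-\varphi)\,dx$ and leaves this regular part untouched, so the identity yields only $\varepsilon u_\varepsilon'(\pm1)=O(1)$ --- which is already implicit in (\ref{1-ide3}) --- not $o(1)$.

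The auxiliary claim that $\frac{1}{\varepsilon}\int_{(b,1)}(u_\varepsilon-\varphi)^2\,dx=o(1)$ is also not established by the proposed moment bound (and is in any case not needed, since that term sits on the favorable side of the identity). The scenario the lemma must exclude is a boundary layer: $u_\varepsilon'(1)\sim m/\varepsilon$, so that $u_\varepsilon$ drops with slope $\sim1/\varepsilon$ on an interval of width $\sim\varepsilon$ next to $x=1$. In that scenario $\int_b^1(1-t)(u_\varepsilon-\varphi)\,dt\sim\varepsilon^2$ and $\int_b^1(u_\varepsilon-\varphi)^2\,dt\sim\varepsilon$, so it is fully consistent with your $O(\varepsilon)$ moment bound and with (\ref{1-ide6}), yet makes $\frac{1}{\varepsilon}\int_{(b,1)}(u_\varepsilon-\varphi)^2\,dx\sim1$; a signed first moment with a weight vanishing at $t=1$ cannot detect it. The paper's proof attacks precisely this scenario by contradiction: assuming $\varepsilon_nu_{\varepsilon_n}'(1)\geq m$, it first gets $u_{\varepsilon_n}''\leq2/\rho_+$ on $(1-\delta\varepsilon_n,1)$ from $w_{\varepsilon_n}(1)=\rho_+$ and a H\"older bound on $w_{\varepsilon_n}'$ derived from (\ref{1-ide6}), hence $u_{\varepsilon_n}(1-\delta\varepsilon_n)\leq-m\delta/2$; convexity then propagates the deviation $|u_{\varepsilon_n}-\varphi|\geq m\delta/8$ over the much longer interval of length $\varepsilon_n^{1/4}$, contradicting the $L^2$ bound (\ref{1-ide6}). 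Some such localized use of convexity near the endpoints appears unavoidable, and your argument never engages with it.
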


	\begin{proof}
		It suffices to show by contradiction that
		$\varepsilon u_\varepsilon'(1) \rightarrow 0$ as $\varepsilon\rightarrow 0$.
		The same argument can be used to show 
		$\varepsilon u_\varepsilon'(-1)$ converges to 0 as $\varepsilon\rightarrow 0$,
		from which the Lemma follows.

		Assume, on the contrary, that there are $m>0$ 
		and a sequence $\varepsilon_n\rightarrow 0$ such that 
		\begin{equation}\label{raahyp}
		\begin{aligned}
			u_{\varepsilon_n}'(1) > \frac{m}{\varepsilon_n}
			\text{.}
		\end{aligned}
		\end{equation}
		First, by (\ref{1-ide6}) and the Cauchy–Schwarz inequality, if $b\leq x<y\leq 1$,
		\begin{equation*}
		\begin{aligned}
			|w_\varepsilon'(x) - w_\varepsilon'(y) |
			= \left|\int_x^y w_\varepsilon''(t)\, dt \right| 
			&\leq\left(\int_x^y 1\, dt \right)^{1/2}
			\left(\int_x^y w_\varepsilon''(t)^2\, dt \right)^{1/2} \\
			&\leq (y-x)^{1/2} 
			\left(\int_b^1\left[\frac{1}{\varepsilon^2}(u_\varepsilon-\varphi)\right]^2dt\right)^{1/2}\\
			&\leq C_4^{1/2} \varepsilon^{-3/2} |x-y|^{1/2}
			\text{.}
		\end{aligned}
		\end{equation*}
		Therefore, if $x,y\in (1-\varepsilon, 1)$, then 
		$|w_\varepsilon'(x) - w_\varepsilon'(y)|\leq C_4^{1/2}\varepsilon^{-1}$.
		Recalling that $w_\varepsilon = 1/u_\varepsilon'' >0 $, 
		we have
		\begin{equation*}
		\begin{aligned}
			\rho_+ 
			&> w_\varepsilon (1) - w_\varepsilon (1-\varepsilon) 
			= \int_{1-\varepsilon}^1 w_\varepsilon'(y) \,dy \\
			&\geq \int_{1-\varepsilon}^1 w_\varepsilon'(x) \,dy 
			- \int_{1-\varepsilon}^1 |w_\varepsilon'(y) - w_\varepsilon'(x)| \,dy \\
			&\geq\varepsilon w_\varepsilon'(x) - C_4^{1/2}
			\quad\text{for }x\geq 1-\varepsilon
			\text{.}
		\end{aligned}
		\end{equation*}
		This yields
		\begin{equation*}
		\begin{aligned}
			w_\varepsilon'(x) \leq (C_4^{1/2} + \rho_+) \varepsilon^{-1}
			\quad\text{for }
			x \in (1-\varepsilon, 1)
			\text{.}
		\end{aligned}
		\end{equation*}

		Now, let $\delta>0$ be a fixed small constant (independent of $\varepsilon$) 
		that satisfies
		\begin{equation*}
		\begin{aligned}
			\delta < 1\quad\text{and }
			\frac{\rho_+}{2} \geq \delta (C_4^{1/2} + \rho_+)
			\text{.}
		\end{aligned}
		\end{equation*}
		For $x\in (1-\delta\varepsilon, 1)$, we have, for some $x^*\in(x,1)$,
		\begin{equation}\label{ue2est}
		\begin{aligned}
			&w_\varepsilon(x)
			= \rho_+ - (1-x)w_\varepsilon'(x^*) 
			\geq \rho_+ -\delta\varepsilon \times (C_4^{1/2} + \rho_+)\varepsilon^{-1} 
			\geq \frac{\rho_+}{2}
			\text{,}
			\quad\text{or equivalently,}\\
			&u_\varepsilon''(x) \leq \frac{2}{\rho_+}
			\text{.}
		\end{aligned}
		\end{equation}
		Choose $n$ large so that $\varepsilon_n$ is small enough to satisfy
		\begin{equation}\label{delta1}
		\begin{aligned}
			\delta \varepsilon_n \frac{2}{\rho_+} \leq \frac{m}{2\varepsilon_n}
			\text{,}
		\end{aligned}
		\end{equation}
		\begin{equation}\label{delta2}
		\begin{aligned}
			\frac{2-\delta\varepsilon_n -\varepsilon_n^{1/4}}{2-\delta\varepsilon_n} \geq \frac{1}{2}
			\text{,}
		\end{aligned}
		\end{equation}
		\begin{equation}\label{delta3}
		\begin{aligned}
			1-\delta\varepsilon_n - \varepsilon_n^{1/4} > b
			\text{,}\quad\text{and }
		\end{aligned}
		\end{equation}
		\begin{equation}\label{delta4}
		\begin{aligned}
			(\delta\varepsilon_n + \varepsilon_n^{1/4})\norm{\varphi'}_{L^\infty(-1,1)} 
			\leq \frac{m\delta}{8}
			\text{.}
		\end{aligned}
		\end{equation}
		Considering (\ref{raahyp}), (\ref{ue2est}) and (\ref{delta1}), 
		we get for $x\in(1-\delta\varepsilon_n, 1)$,
		\begin{equation*}
		\begin{aligned}
			u_{\varepsilon_n}'(x) 
			&= u_{\varepsilon_n}'(1)-\int_x^1 u_{\varepsilon_n}''(t) \, dt
			&\geq \frac{m}{\varepsilon_n} - \delta{\varepsilon_n} \times \frac{2}{\rho_+}\\
			&\geq \frac{m}{2\varepsilon_n}
			\text{.}
		\end{aligned}
		\end{equation*}
		Thus, from $u_{\varepsilon_n}(1)=0$,
		\begin{equation}\label{uenear1}
		\begin{aligned}
			u_{\varepsilon_n}(1-\delta\varepsilon_n)
			\leq -\frac{m}{2\varepsilon_n}  \delta\varepsilon_n = -\frac{m\delta}{2}
			\text{.}
		\end{aligned}
		\end{equation}
		
		We now use the convexity of $u_{\varepsilon_n}$, (\ref{delta2})--(\ref{uenear1})
		and $u_{\varepsilon_n}(\pm 1) = 0$ to estimate
		$\norm{u_{\varepsilon_n}-\varphi}_{L^2(b,1)}$ to get a contradiction.
		For $x \in (1-\delta\varepsilon_n - \varepsilon_n^{1/4}, 1-\delta\varepsilon_n )$, 
		we define (see Figure \ref{uegraph})
		\begin{equation*}
		\begin{aligned}
			&A = (-1,0)\text{,}\quad
			B = (x,0)\text{,}\quad
			C = (1-\delta\varepsilon_n, 0)\text{,}\quad\\
			&D = (1-\delta\varepsilon_n, u_{\varepsilon_n}(1-\delta\varepsilon_n))\text{,}\quad
			E = (x, u_{\varepsilon_n}(x))\text{,}\quad\text{and }
			F = BE \cap AD \text{.}
		\end{aligned}
		\end{equation*}
		Because $u_{\varepsilon_n}$ is convex, its graph is below $AD$
		and therefore, $F$ is on the line segment $BE$.
		As the triangles $ABF$ and $ACD$ are similar, we have
		\begin{equation}\label{fig-1}
		\begin{aligned}
			\frac{BF}{CD} = \frac{AB}{AC}
			\text{.}
		\end{aligned}
		\end{equation}
		We also know that
		\begin{equation}
		\begin{aligned}
			BF < BE = |u_{\varepsilon_n} (x)|
			\text{,}\quad
			AC = 2- \delta\varepsilon_n
			\text{,}\quad
			AB = x+ 1\geq 2-\delta\varepsilon_n - \varepsilon_n^{1/4}
			\text{,}
		\end{aligned}
		\end{equation}
		and by (\ref{uenear1}), 
		\begin{align}\label{fig-1-0}
			CD = |u_{\varepsilon_n}(1-\delta\varepsilon_n)|
			\geq \frac{m\delta}{2}
			\text{.}
		\end{align}
		\begin{figure}[ht]
			\centering
			\begin{tikzpicture}[line cap=round,line join=round,>=triangle 45,x=3cm,y=3cm]
				\begin{axis}[
					x=3cm,y=3cm,
					axis x line=middle,
					axis y line=none,
					xmin=-1.4,
					xmax=1.4,
					ymin=-1.7,
					ymax=0.2,
					xticklabel=\empty,yticklabel=\empty,]
					\draw[line width=0.5pt,color=black,smooth,samples=100,domain=-1:1] 
						plot(\x,{5/2*(((\x)/2+1/2)^(4)-((\x)/2+1/2))});
					\draw[line width=0.5pt] (-1,0)-- (0.15835300626052404,-1.1666321803457715);
					\draw[line width=0.5pt] (-0.30030884186063633,0)-- (-0.30030884186063633,-0.837164486844264);
					\draw[line width=0.5pt] (0.15835300626052404,0)-- (0.15835300626052404,-1.1666321803457715);
					\begin{scriptsize}
						\draw[color=black] (1.0596640448170747,0.1) node {};
						\draw[color=black] (-0.9816315546023938,0.1) node {$A$};
						\draw[color=black] (0.16,-1.27) node {$D$};
						\draw[color=black] (-0.32,-0.97) node {$E$};
						\draw[color=black] (0.16,0.1) node {$C$};
						\draw[color=black] (-0.27907182255287494,0.1) node {$B$};
						\draw[color=black] (-0.22,-0.6504352621730012) node {$F$};
						\draw[color=black] (-0.22,-0.1) node {$x$};
						\draw[color=black] (0.4,-0.1) node {$1-\delta\varepsilon_n$};
						\draw[color=black] (0.75, -1.0) node {$u_{\varepsilon_n}$};
						\draw[color=black] (-1.07,-0.1) node {$-1$};
						\draw[color=black] (1.05,-0.1) node {$1$};
					\end{scriptsize}
				\end{axis}
			\end{tikzpicture}
			\caption{Construction of the points $A$--$F$}\label{uegraph}
		\end{figure}
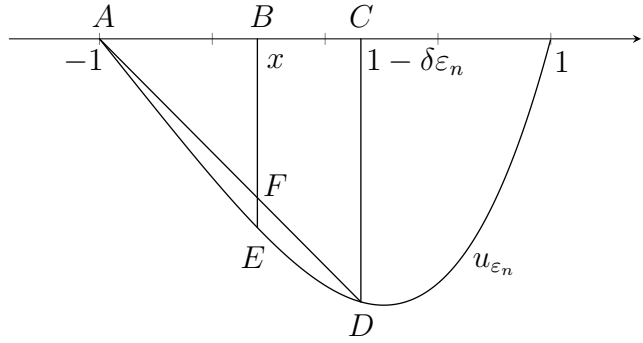

		Therefore, from (\ref{delta2}) and (\ref{fig-1})--(\ref{fig-1-0}), we have
		\begin{equation}\label{dist1}
		\begin{aligned}
			|u_{\varepsilon_n} (x) |
			&> BF =  \frac{AB}{AC} CD 
			\geq \frac{(2-\delta\varepsilon_n - \varepsilon_n^{1/4})}{2-\delta\varepsilon_n} \frac{m\delta}{2} \\
			&\geq \frac{1}{2} \frac{m\delta}{2} = \frac{m\delta}{4}
			\text{.}
		\end{aligned}
		\end{equation}

		Also, from (\ref{delta4}) and $\varphi(1) = 0$, for 
		$x\in(1-\delta\varepsilon_n-\varepsilon_n^{1/4}, 1-\delta\varepsilon_n)$
		we get
		\begin{equation}\label{dist2}
		\begin{aligned}
			|\varphi(x)| 
			&\leq\int_x^1 |\varphi'(t)| \,dt 
			\leq (1-x) \norm{\varphi'}_{L^\infty(-1,1)} \\
			&\leq (\delta\varepsilon_n+\varepsilon_n^{1/4})\norm{\varphi'}_{L^\infty(-1,1)}\\
			&\leq\frac{m\delta}{8}
			\text{.}
		\end{aligned}
		\end{equation}
		Putting (\ref{dist1}) and (\ref{dist2}) together yields
		\begin{equation*}
		\begin{aligned}
			|u_{\varepsilon_n} - \varphi| \geq \frac{m\delta}{8}
			\quad\text{in }
			(1-\delta\varepsilon_n-\varepsilon_n^{1/4}, 1-\delta\varepsilon_n)
			\text{.}
		\end{aligned}
		\end{equation*}
		
		Therefore, we conclude from (\ref{delta3}) that
		\begin{equation}\label{1dl1bd}
		\begin{aligned}
			\norm{u_{\varepsilon_n}-\varphi}_{L^2(b, 1)}^2
			\geq \int_{1-\delta\varepsilon_n -\varepsilon_n^{1/4}}^{1-\delta\varepsilon_n} |u_{\varepsilon_n}(x)-\varphi(x)|^2\, dx 
			\geq \left(\frac{m\delta}{8}\right)^2 \varepsilon_n^{1/4}
			\text{.}
		\end{aligned}
		\end{equation}
		However, (\ref{1-ide6}) gives the bound
		\begin{equation*}
		\begin{aligned}
			\norm{u_{\varepsilon_n} -\varphi}_{L^2(b,1)} \leq C_4^{1/2} \varepsilon_n^{1/2}
			\text{,}
		\end{aligned}
		\end{equation*}
		which contradicts (\ref{1dl1bd}) for small values of $\varepsilon_n$.
		This completes the proof of the Lemma.
		\end{proof}

		Now, we prove Theorem \ref{mainthm}\ref{thm:conv}.

		\begin{proof}[Proof of Theorem \ref{mainthm}\ref{thm:conv}]
		By (\ref{1-ide8}), the family $(u_\varepsilon)$ of 
		$W^{4,\infty}(-1,1)$ solutions to (\ref{1deqn}) satisfies
		\begin{equation}\label{unifbdd}
		\begin{aligned}
			\norm{u_\varepsilon}_{L^\infty(-1,1)} \leq C
		\end{aligned}
		\end{equation}
		for $C$ independent of $\varepsilon$.
		Furthermore, for any interval $I=[t_1,t_2]$ compactly supported in $(-1,1)$,
		we can combine (\ref{unifbdd}) with the gradient bound (\ref{1-ide2-1}) to obtain
		\begin{equation}\label{unifgradbdd}
		\begin{aligned}
			\norm{u_\varepsilon'}_{L^\infty(I)} \leq \widetilde{C}(t_1, t_2) 
			= \widetilde{C}(I)
			\text{.}
		\end{aligned}
		\end{equation}
		Here, $\widetilde{C}$ does not depend on $\varepsilon$ but on the distance of the set $I$
		to the exterior of $(-1,1)$.
		From (\ref{unifbdd}) and (\ref{unifgradbdd}), by
		passing to a subsequence $\varepsilon_k\rightarrow 0$, we have 
		\begin{equation}\label{1dsubconv}
		\begin{aligned}
			&u_{\varepsilon_k}\rightarrow u 
			\quad\text{weakly in } W^{1,2}(a,b)\text{,}\quad\text{and} \\
			&u_{\varepsilon_k}\rightarrow u 
			\quad\text{uniformly on compact intervals in }
			(-1,1)\text{,}
		\end{aligned}
		\end{equation}
		for some convex function $u$ in $(-1,1)$.
		From (\ref{1-ide6}), we have
		\begin{equation}\label{1dl2bd}
		\begin{aligned}
			\int_{(-1,a)\cup(b,1)} (u_{\varepsilon_k}-\varphi)^2\,dx
			\leq C_4\varepsilon_k 
			\rightarrow 0
			\quad\text{as }k\rightarrow 0
			\text{.}
		\end{aligned}
		\end{equation}
		Therefore, from (\ref{1dsubconv}), we have $u=\varphi$ outside $(a,b)$
		and hence $u\in\overline{S}[\varphi]$.
		As in Le \cite{Singular_Abreu, Convex_Approx}
		we will prove that $u$ minimizes the functional $J$ given by (\ref{1dfnctl})
		over $\overline{S}[\varphi]$ defined by (\ref{1dvarprob}) 
		by the following steps.

		\emph{Step 1}.
		We show that 
		\begin{equation}\label{step1}
		\begin{aligned}
			\liminf_{k\rightarrow\infty} J(u_{\varepsilon_k}) \geq J(u)
			\text{.}
		\end{aligned}
		\end{equation}
		From the convexity of $F^0$ in $z$ and $F^1$ in $p$, we have
		\begin{equation*}
		\begin{aligned}
			&J(u_{\varepsilon_k}) - J(u) \\
			&= \int_a^b \left[F^0(x, u_{\varepsilon_k}(x)) - F^0(x, u(x)) \right] dx
			+ \int_a^b \left[ F^1 (x,u_{\varepsilon_k}'(x)) - F^1(x, u'(x)) \right] dx \\
			&\geq \int_a^b F_z^0 (x, u(x)) (u_{\varepsilon_k} - u) \, dx 
			+ \int_a^b F_p^1(x, u'(x)) (u_{\varepsilon_k}'(x) -u'(x)) \, dx
			\text{.}
		\end{aligned}
		\end{equation*}
		By (\ref{1dsubconv}) and $|F_z^0(x,u(x))|\leq \eta(C_3)$, 
		the right-hand side converges to $0$ as $k\rightarrow\infty$,
		and the desired inequality (\ref{step1}) holds.

		\emph{Step 2}.
		Suppose $v\in\overline{S}[\varphi]$ is given by $v=v_1+v_2$,
		where $v_1$ is convex and $v_2\in C^2([-1,1])$ satisfies
		$v_2''\geq\alpha > 0$.
		We show that
		\begin{equation}\label{step2}
		\begin{aligned}
			J(v)\geq J(u_{\varepsilon})-A(\varepsilon)
			\text{,}\quad\text{where }
			A(\varepsilon)\rightarrow 0
			\quad\text{as }\varepsilon\rightarrow 0
			\text{.}
		\end{aligned}
		\end{equation}

		We approximate $v$ by smooth functions using mollifiers.
		Let $\rho \geq 0$ be smooth, supported on $(-1,1)$, 
		and satisfy $\int_{-1}^1 \rho\, dx = 1$.
		Extend $\varphi$ to be $C^3$ and uniformly convex on a neighborhood of $[-1,1]$,
		and also extend $v$ by setting $v=\varphi$ outside $[-1,1]$.
		For $\delta>0$ sufficiently small define $v_\delta=v*\rho_\delta$,
		where $\rho_\delta(x) = \delta^{-1}\rho(\delta^{-1}x)$.
		Then, as $\delta\rightarrow 0$, we have
		\begin{equation}\label{vconv}
		\begin{aligned}
			&v_\delta\rightarrow v\quad\text{in }[-1,1]
			\text{,}\quad
			v_\delta'\rightarrow v'\quad\text{a.e. in }[-1,1]
			\text{,}\quad\text{and}\\
			&v_\delta^{(k)}\rightarrow v^{(k)}=\varphi^{(k)}
			\quad\text{in } [-1,a)\cup (b,1]
			\quad\text{for }k\leq 2\text{.}
		\end{aligned}
		\end{equation}

		Recall from (\ref{jedef}) that for $w\in C^2(-1,1)$,
		\begin{equation}\label{1d-2-0}
		\begin{aligned}
			J_\varepsilon (w)
			= J(w) - \varepsilon\int_{-1}^1 \log w''\,dx
			+\frac{1}{2\varepsilon} \int_{(-1,a)\cup(b,1)} (w-\varphi)^2\,dx
			\text{.}
		\end{aligned}
		\end{equation}
		From the convexity of $F^0$ and $F^1$,
		\begin{equation}\label{1d-2-1}
		\begin{aligned}
			&J(v_\delta) - J(u_\varepsilon)\\
			&= \int_a^b \left[ F^0(x, v_\delta(x)) - F^0(x, u_\varepsilon(x)) \right] dx
			+ \int_a^b \left[F^1 (x,v_\delta'(x)) - F^1(x, u_\varepsilon'(x)) \right] dx \\
			&\geq \int_a^b F_z^0 (x, u_\varepsilon(x)) (v_\delta - u_\varepsilon) \, dx 
			+ \int_a^b F_p^1(x, u_\varepsilon'(x)) (v_\delta'(x) -u_\varepsilon'(x)) \, dx\\
			&= [F_p^1 (x, u_\varepsilon') (v_\delta-u_\varepsilon)]_a^b
			+ \int_a^b \left[F_z^0 (x, u_\varepsilon(x)) - 
				\frac{\partial}{\partial x} 
				\left(F_p^1(x, u_\varepsilon'(x))\right) \right] (v_\delta-u_\varepsilon) \, dx \\
			&= [F_p^1 (x, u_\varepsilon') (v_\delta-u_\varepsilon)]_a^b
			+ \int_a^b \varepsilon w_\varepsilon'' (v_\delta-u_\varepsilon) \,dx
			\text{.}
		\end{aligned}
		\end{equation}
		As $x \mapsto x^2$ is convex, we have
		\begin{equation}\label{1d-2-2}
		\begin{aligned}
			\frac{1}{2\varepsilon} \int_{(-1,a)\cup (b,1)} (v_\delta-\varphi)^2\,dx
			&- \frac{1}{2\varepsilon} \int_{(-1,a)\cup (b,1)} (u_\varepsilon-\varphi)^2\,dx \\
			&\geq \frac{1}{\varepsilon}\int_{(-1,a)\cup(b,1)}(u_\varepsilon-\varphi)(v_\delta-u_\varepsilon)\,dx\\
			&= \int_{(-1,a)\cup (b,1)}\varepsilon w_\varepsilon'' (v_\delta-u_\varepsilon)\,dx
			\text{.}
		\end{aligned}
		\end{equation}
		As $x \mapsto \log x$ is concave, we have
		\begin{equation}\label{1d-2-3}
		\begin{aligned}
			\varepsilon\int_{-1}^1 \log u_\varepsilon''\,dx 
			&- \varepsilon\int_{-1}^1 \log v_\delta''\,dx\\
			&\geq \varepsilon\int_{-1}^1 \frac{1}{u_\varepsilon''} (u_\varepsilon'' - v_\delta'')\,dx
			= \varepsilon\int_{-1}^1 w_\varepsilon (u_\varepsilon''-v_\delta'')\,dx \\
			&= \varepsilon\left([w_\varepsilon (u_\varepsilon'-v_\delta')]_{-1}^1
				- [w_\varepsilon' (u_\varepsilon-v_\delta)]_{-1}^1
				+ \int_{-1}^1 w_\varepsilon''(u_\varepsilon-v_\delta)\,dx \right)
			\text{.}
		\end{aligned}
		\end{equation}
		
		We also have
		\begin{equation}\label{1-e1-1}
		\begin{aligned}
			&\frac{1}{2\varepsilon}\int_{(-1,a)\cup(b,1)} \left[(u_\varepsilon-\varphi)^2-(v_\delta-\varphi)^2 \right] dx
			\geq -\frac{1}{2\varepsilon}\int_{(-1,a)\cup(b,1)} (v_\delta-\varphi)^2  dx
			\text{,}
			\quad\text{and }\\
			&- \varepsilon\int_{-1}^1 \log u_\varepsilon''\, dx
			\geq  - \varepsilon\int_{-1}^1 u_\varepsilon''\, dx 
			= - \varepsilon(u_\varepsilon'(1)-u_\varepsilon'(-1))
			\text{.}
		\end{aligned}
		\end{equation}
		Therefore, from (\ref{1d-2-1})--(\ref{1-e1-1}), we have
		\begin{equation}\label{2-ide1}
		\begin{aligned}
			J(v_\delta)-J(u_\varepsilon)
			&\geq  [F_p^1 (x, u_\varepsilon') (v_\delta-u_\varepsilon)]_a^b
			+ [\varepsilon w_\varepsilon (u_\varepsilon' - v_\delta')]_{-1}^1
			- [\varepsilon w_\varepsilon' (u_\varepsilon-v_\delta)]_{-1}^1 \\
			&-\frac{1}{2\varepsilon}\int_{(-1,a)\cup(b,1)} (v_\delta-\varphi)^2  dx
			+ \varepsilon\int_{-1}^1 \log v_\delta''\, dx - \varepsilon(u_\varepsilon'(1)-u_\varepsilon'(-1))
			\text{.}
		\end{aligned}
		\end{equation}

		We first let $\delta\rightarrow 0$ in (\ref{2-ide1}).
		From (\ref{vconv}), we get
		\begin{equation}
		\begin{aligned}
			[F_p^1 (x, u_\varepsilon') (v_\delta-u_\varepsilon)]_a^b
			&\rightarrow [F_p^1 (x, u_\varepsilon') (v-u_\varepsilon)]_a^b
			\text{,} \quad\text{and }\\
			[\varepsilon w_\varepsilon (u_\varepsilon' - v_\delta')]_{-1}^1
			&\rightarrow [\varepsilon w_\varepsilon (u_\varepsilon' - v')]_{-1}^1
			\text{.}
		\end{aligned}
		\end{equation}
		As $v=\varphi$ outside $(a,b)$ and $u_\varepsilon(\pm 1)=\varphi(\pm 1)=0$, 
		from (\ref{vconv}) we also have
		\begin{equation}
		\begin{aligned}
			-\frac{1}{2\varepsilon}\int_{(-1,a)\cup(b,1)} (v_\delta-\varphi)^2  dx
			\rightarrow 0
			\text{,}\quad\text{and } 
			[\varepsilon w_\varepsilon' (u_\varepsilon-v_\delta)]_{-1}^1 
			\rightarrow 0
			\quad\text{as }\delta\rightarrow 0
			\text{.}
		\end{aligned}
		\end{equation}

		Recall that $v_\delta=v*\rho_\delta$, and $v=v_1+v_2$ where $v_2''\geq\alpha$.
		Hence, $v_\delta'' \geq \alpha$, which yields
		\begin{equation}\label{2-ide1-0}
		\begin{aligned}
			\liminf_{\delta\rightarrow 0} 
			\left(\varepsilon\int_{-1}^1 \log v_\delta'' \, dx \right) 
			\geq 2\varepsilon\log \alpha
			\text{.}
		\end{aligned}
		\end{equation}
		By (\ref{vconv}), $J(v_\delta)\rightarrow J(v)$ as $\delta\rightarrow 0$.
		Considering (\ref{2-ide1})--(\ref{2-ide1-0}), we get
		\begin{equation}\label{2-ide2}
		\begin{aligned}
			J(v)&-J(u_\varepsilon)\\
			&\geq [F_p^1 (x, u_\varepsilon') (v-u_\varepsilon)]_a^b
			+  [\varepsilon w_\varepsilon (u_\varepsilon' - v')]_{-1}^1
			+2\varepsilon\log\alpha-\varepsilon(u_\varepsilon'(1)-u_\varepsilon'(-1))
			\text{.}
		\end{aligned}
		\end{equation}

		Now, we let $\varepsilon\rightarrow 0$ in (\ref{2-ide2}).
		First, we have
		\begin{equation}
		\begin{aligned}
			\relax[F_p^1 (x, u_\varepsilon')(v-u_\varepsilon)]_a^b 
			\rightarrow 0 \quad\text{as }\varepsilon \rightarrow 0
		\end{aligned}
		\end{equation}
		because $|F_p^1 (x, u_\varepsilon')| \leq \eta(D_1)$,		
		and $u_\varepsilon(t)$ converges to 
		$\varphi(t)=v(t)$ as $\varepsilon\rightarrow 0$ if $t\notin (a,b)$.
		By Lemma \ref{gradconvlemma}, we have
		\begin{equation}\label{1-e2}
		\begin{aligned}
			\varepsilon w_\varepsilon(\pm 1) u_\varepsilon'(\pm 1) = \rho_\pm \varepsilon u_\varepsilon'(\pm 1)
			\rightarrow 0
			\text{,}\quad\text{and }
			\varepsilon u_\varepsilon'(\pm 1)\rightarrow 0
			\quad\text{as }\varepsilon\rightarrow 0
		\end{aligned}
		\end{equation}
		We also have
		\begin{equation}\label{2-ide2-0}
		\begin{aligned}
			2\varepsilon\log\alpha\rightarrow 0
			\text{,}\quad\text{and}\quad
			\varepsilon w_\varepsilon(\pm 1) v'(\pm 1) = \varepsilon \rho_\pm v'(\pm 1)
			\rightarrow 0 \quad\text{as } \varepsilon\rightarrow 0
			\text{.}
		\end{aligned}
		\end{equation}
		Putting (\ref{2-ide2})--(\ref{2-ide2-0}) together completes the proof of Step 2.

		\begin{remark}\label{rmk:gradconv}
		The term $\varepsilon w_\varepsilon(\pm 1) u_{\varepsilon}'(\pm 1)$
		corresponds to $\varepsilon^{(n-1)/n}\eta_\varepsilon$ 
		from the proof of Le \cite[(3.20)]{Convex_Approx}.
		$\varepsilon^{(n-1)/n}$ converges to $0$ as $\varepsilon\rightarrow 0$ if $n\geq 2$,
		but this term is a constant when $n=1$
		and the estimate does not directly imply the result in our case.
		\end{remark}
		
		\emph{Step 3}.
		Finally, we show that $J(v)\geq J(u)$ for any $v\in\overline{S}[\varphi]$.

		Since $v\in\overline{S}[\varphi]$,
		$v_\lambda:=\lambda v + (1-\lambda)\varphi$ is in $\overline{S}[\varphi]$.
		Also, $v_\lambda = v_1 + v_2$ for $v_1=\lambda v$ and $v_2=(1-\lambda)\varphi$.
		Recalling that $\varphi''\geq c_0 >0$, 
		we find that $v$ satisfies the assumptions in Step 2.
		Therefore, from (\ref{step1}) and (\ref{step2}), we get
		\begin{equation}\label{unif1}
		\begin{aligned}
			J(v_\lambda)
			\geq \liminf_{k\rightarrow\infty} (J(u_{\varepsilon_k})-A(\varepsilon_k)) 
			= \liminf_{k\rightarrow\infty} J(u_{\varepsilon_k}) 
			\geq J(u)
			\quad\text{for all }\lambda\in(0,1)
			\text{.}
		\end{aligned}
		\end{equation}
		By definition, $J(v_\lambda)\rightarrow J(v)$ as $\lambda\rightarrow 1$.
		Therefore, passing to the limit of $\lambda\rightarrow 1$ in (\ref{unif1}),
		we conclude that $J(v)\geq J(u)$.
		Hence $u$ is a minimizer to the variational problem (\ref{1dfnctl})--(\ref{1dvarprob}).
		This completes the proof of Theorem \ref{mainthm}\ref{thm:conv}.
	\end{proof}

	\section{Characterization of Limiting Minimizers}\label{sec:eleqthm}
	In this section, we prove Theorem \ref{mainthm}\ref{thm:eqn} 
	by establishing (\ref{eleqdist}).
	
	\begin{proof}[Proof of Theorem \ref{mainthm}\ref{thm:eqn}]
	We start with the subsequence $(u_{\varepsilon_k})_{k}$ in (\ref{1dsubconv}).
	By the convexity of $u_{\varepsilon_k}$ we have, for any $x\in(a,b)$ and small $\delta > 0$,
	\begin{equation*}
	\begin{aligned}
		u_{\varepsilon_k}'(x) \leq \frac{u_{\varepsilon_k} (x+\delta)-u_{\varepsilon_k}(x)}{\delta}
		\text{.}
	\end{aligned}
	\end{equation*}
	As $u_{\varepsilon_k}$ converges uniformly on compact sets of $(-1,1)$ to the convex function $u$,
	\begin{equation*}
	\begin{aligned}
		\limsup_{k\rightarrow\infty} u_{\varepsilon_k}'(x)
		\leq \frac{u(x+\delta)-u(x)}{\delta}
	\end{aligned}
	\end{equation*}
	for all $x\in(a,b)$ and small $\delta > 0$. 
	Letting $\delta\rightarrow 0$, we can conclude that
	\begin{equation}\label{limsup}
	\begin{aligned}
		\limsup_{k\rightarrow\infty} u_{\varepsilon_k}'(x)
		\leq u'(x)
		\quad\text{for }x\in S
		\text{,}
	\end{aligned}
	\end{equation}
	where $S$ is the set of points of differentiability of $u$ in $(a,b)$.
	Using the same argument on
	$(u_{\varepsilon_k}(x)-u_{\varepsilon_k}(x-\delta))/\delta$,
	we obtain
	\begin{equation}\label{liminf}
	\begin{aligned}
		\liminf_{k\rightarrow\infty} u_{\varepsilon_k}'(x)\geq u'(x)
		\quad\text{for }x\in S
		\text{.}
	\end{aligned}
	\end{equation}
	By (\ref{limsup}) and (\ref{liminf}), $u_{\varepsilon_k}'$ 
	converges pointwise to $u'$ on $S$.
	The function $u$, being convex on the interval $(-1,1)$, is Lipschitz on $(a,b)$.
	Hence, $u$ is differentiable a.e. on $(a,b)$ by Rademacher's theorem. 
	Therefore, $u_{\varepsilon_k}'$ converges a.e. on $(a,b)$ to $u'$.

	Now we turn our attention to the equation
	\begin{equation}\label{sec3eqn}
	\begin{aligned}
		(\varepsilon w_\varepsilon)'' 
		&= \varepsilon w_\varepsilon''
		&=F_z^0 (x,u_\varepsilon) - (F_p^1 (x,u_\varepsilon'))'
		\quad\text{in }(a,b)
		\text{,}
	\end{aligned}
	\end{equation}
	and derive (\ref{eleqdist}) by showing the convergence of the two terms separately.
	First, from (\ref{sec3growth}), the uniform bound on $u_\varepsilon$,
	and the uniform convergence of $u_{\varepsilon_k}$ to $u$, we have 
	\begin{equation}\label{sec3conv1}
	\begin{aligned}
		F_z^0 (x,u_{\varepsilon_k}) \rightarrow F_z^0 (x,u)
		\quad\text{uniformly in }(a,b)
		\text{.}
	\end{aligned}
	\end{equation}
	Next, from the estimate (\ref{west}),
	$\varepsilon_k w_{\varepsilon_k} = {\varepsilon_k}/{u_{\varepsilon_k}''}$ 
	is uniformly bounded and hence has a subsequence $\varepsilon_{k_j} w_{\varepsilon_{k_j}}$
	converging weakly in $L^q(a,b)$ to a function $w\in L^q(a,b)$.
	Hence, we have
	\begin{equation}\label{sec3conv2}
	\begin{aligned}
		(\varepsilon_{k_j} w_{\varepsilon_{k_j}})'' \rightarrow w''
		\quad\text{in the sense of distributions.}
	\end{aligned}
	\end{equation}
	
	Finally, from (\ref{1dgrad}),
	$|F_p^1(x, u_{\varepsilon_{k_j}}')|\leq \eta_1(D_1)$ 
	and therefore $F_p^1(x, u_{\varepsilon_{k_j}}')$ is uniformly bounded in $j$.
	This, together with the
	almost everywhere convergence of $u_{\varepsilon_{k_j}}'$ to $u'$, the continuity of $F_p^1$,
	and the Dominated Convergence Theorem implies that
	$F_p^1(x,u_{\varepsilon_{k_j}}')$ converges to $F_p^1(x,u')$
	strongly in $L^r(a,b)$ for $1\leq r < \infty$.
	This yields
	\begin{equation}\label{sec3conv3}
	\begin{aligned}
		(F_p^1(x,u_{\varepsilon_{k_j}}'))' 
		\rightarrow (F_p^1(x,u'))'
		\quad\text{in the sense of distributions.}
	\end{aligned}
	\end{equation}
	
	Hence, passing to the limit along the subsequence $(u_{\varepsilon_k})$ 
	in (\ref{sec3eqn}) and
	applying (\ref{sec3conv1})--(\ref{sec3conv3}), we obtain
	\begin{equation}
	\begin{aligned}
		w'' = F_z^0(x,u) - (F_p^1(x,u'))'
		\quad\text{in the sense of distributions.}
	\end{aligned}
	\end{equation}
	This gives us (\ref{eleqdist}) as asserted.
	The proof of the Theorem is complete.
	\end{proof}

	\section{Conclusion}\label{sec:conclusion}

	The primary focus of this note is the study of fourth-order Abreu-type equations in dimension one.
	In dimensions higher than or equal to two, 
	employing an approximation scheme introduced by Carlier-Radice \cite{CR}
	and extended by Le \cite{Singular_Abreu} has enabled authors to use
	solutions to the second boundary value problem for Abreu-type equations 
	to approximate minimizers of convex functionals 
	subject to convexity constraint in the form of (\ref{hdvar})--(\ref{hdvarcon})
	\cite{CR,Singular_Abreu,Convex_Approx,Twisted_Harnack,Abreu_HD}.

	In dimension one, 
	Abreu-type equations can exhibit various solvability phenomena, 
	as highlighted in Chau-Weinkove \cite[Proposition 3.2]{CW}.
	In this note,
	we have demonstrated that for Abreu-type equations with a singular term, 
	solvability results similar to those in higher dimensions are achievable;
	see Theorem \ref{mainthm}\ref{thm:est}--\ref{thm:conv}.
	Additionally, we obtained a new estimate in (\ref{west}) for solutions.
	By combining this estimate with the approximation scheme in 
	Theorem \ref{mainthm}\ref{thm:est}--\ref{thm:conv},
	we have characterized limiting minimizers as stated in Theorem \ref{mainthm}\ref{thm:eqn}.

	Future research could explore the following issues:
	\begin{enumerate}
		\item Since our characterization in Theorem \ref{mainthm}\ref{thm:eqn}
			relies on the approximation scheme,
			it only applies to minimizers that can be approximated as limits of solutions to (1.9).
			Therefore, it would be interesting to find out whether there are minimizers 
			that cannot be approximated in this matter,
			and if so, determine if similar characterization can be achieved for these minimizers.
			Also see Remark \ref{rmk:Lions}.
		\item While the 
			approximation scheme in Theorem \ref{mainthm}\ref{thm:est}--\ref{thm:conv} is already 
			established in higher dimensions,
			estimates similar to (\ref{west}) have not been proved, 
			to the best of the author's knowledge. 
			This would be the missing part in obtaining characterizations 
			similar to Theorem \ref{mainthm}\ref{thm:eqn} in higher dimensions.
			Determining whether such estimates hold for solutions to Abreu-type equations 
			in dimensions at least two could be a direction for further study.
	\end{enumerate}

	\textbf{Acknowledgements.}
	The author would like to thank his advisor, Professor N.Q. Le,
	for suggesting the problem,
	and providing helpful guidance and advice throughout the course of this work.

	The author would also like to thank the anonymous referee for providing constructive feedback,
	which helped the author in improving this note.

	The research of the author was supported in part by NSF grant DMS-2054686.

\end{document}